\newtheorem{theorem}{Theorem}[section]
\newtheorem{lemma}[theorem]{Lemma}
\newtheorem{corollary}[theorem]{Corollary}
\newtheorem{conj}[theorem]{Conjecture}
\theoremstyle{definition}
\newtheorem{remark}[theorem]{Remark}
\numberwithin{equation}{section}
\begin{document}

%
%
%
%
%
%
%
%
%

\author[Gurinder Singh]{Gurinder Singh (ORCID: 0000-0002-2319-2100)}
\address{Department of Mathematics, Indian Institute of Technology Guwahati, Assam, India, PIN- 781039}
\email{gurinder.singh@iitg.ac.in}

\title[Hook length biases in ordinary and $t$-regular partitions]
 {Hook length biases in ordinary and $t$-regular partitions}
 
\author[Rupam Barman]{Rupam Barman (ORCID: 0000-0002-4480-1788)}
\address{Department of Mathematics, Indian Institute of Technology Guwahati, Assam, India, PIN- 781039}
\email{rupam@iitg.ac.in}

\date{\today}


\subjclass[2010]{11P82, 05A17, 05A15, 05A19}

\keywords{hook lengths; regular partitions; partition inequalities}


\begin{abstract} 
In this article, we study hook lengths of ordinary partitions and $t$-regular partitions. We establish hook length biases for the ordinary partitions and motivated by them we find a few interesting hook length biases in $2$-regular partitions. 
For a positive integer $k$, let $p_{(k)}(n)$ denote the number of hooks of length $k$ in all the partitions of $n$. We prove that $p_{(k)}(n)\geq p_{(k+1)}(n)$ for all $n\geq0$ and $n\ne k+1$; and $p_{(k)}(k+1)- p_{(k+1)}(k+1)=-1$ for $k\geq 2$. 
For integers $t\geq2$ and $k\geq1$, let $b_{t,k}(n)$ denote the number of hooks of length $k$ in all the $t$-regular partitions of $n$. We find generating functions of $b_{t,k}(n)$ for certain values of $t$ and $k$. 
Exploring hook length biases for $b_{t,k}(n)$, we observe that in certain cases biases are opposite to the biases for ordinary partitions. We prove that $b_{2,2}(n)\geq b_{2,1}(n)$ for all $n>4$, 
whereas $b_{2,2}(n)\geq b_{2,3}(n)$ for all $n\geq 0$. We also propose some conjectures on biases among $b_{t,k}(n)$.
\end{abstract}

\maketitle
\section{Introduction and statement of results} 
A partition of a positive integer $n$ is a finite sequence of non-increasing positive integers $\lambda=(\lambda_1, \lambda_2, \ldots, \lambda_r)$ such that $\lambda_1+\lambda_2+\cdots +\lambda_r=n$. 
Let $p(n)$ denote the number of partitions of $n$. By convention we take $p(0):=1$. The generating function for the partition function $p(n)$  is given by
\begin{align}\label{gen_fn_p(n)}
\sum_{n=0}^{\infty}p(n)q^n=\frac{1}{(q;q)_{\infty}}.
\end{align}
Here, $(a;q)_{\infty}:=\prod_{j=0}^{\infty}(1-aq^{j})$ is the \textit{$q$-Pochhammer symbol}. The number of parts of a partition $\lambda$ is called the \textit{length} of $\lambda$ and is denoted by $\ell(\lambda)$. Let $t\geq 2$ be a fixed positive integer. 
A $t$-regular partition of a positive integer $n$ is a partition of $n$ such that none of its parts is divisible by $t$.
\par A \textit{Young diagram} of a partition $(\lambda_1, \lambda_2, \ldots, \lambda_r)$ is a left-justified array of boxes with $i$-th row (from the top) having $\lambda_i$ boxes. 
For example, the Young diagram of the partition $(5,4,2,2,1)$ is shown in Figure \ref{Figure6.1} (left). The \textit{hook length} of a box in a Young diagram is the sum of the number of the boxes directly right to it, the number of boxes directly below it and 1 (for the box itself).
For example, see Figure \ref{Figure6.1} (right) for the hook lengths of each box in the Young diagram of the partition $(5,4,2,2,1)$.
\begin{figure}[h]
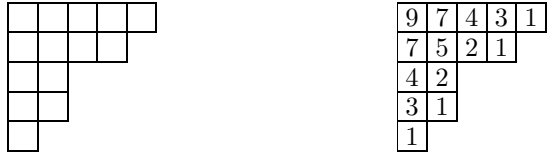
\label{Figure6.1}
\centering
\begin{minipage}[b]{0.4\textwidth}
\[\young(~~~~~,~~~~,~~,~~,~)\]
\end{minipage}
\begin{minipage}[b]{0.4\textwidth}
\[\young(97431,7521,42,31,1)\]
\end{minipage}
\caption{The Young diagram of the partition $(5,4,2,2,1)$ and its hook lengths}
\end{figure}
\par 
Hook lengths of partitions have important connection with representation theory of the symmetric groups $S_n$ and $GL_n(\mathbb{C})$. Hook lengths also appear in the Seiberg-Witten theory of random partitions, 
which gives Nekrasov-Okounkov formula for arbitrary powers of Euler's infinite product in terms of hook numbers. For more in these directions, see e.g. \cite{Garvan_1990, James, Littlewood,Nekrasov, Stanley}. 
Other than the ordinary partition function, hook lengths have also been studied for several restricted partition functions, for example, partitions into odd parts, partitions into distinct parts, partitions into odd and distinct parts, 
self conjugate partitions and doubled distinct partitions, see e.g. \cite{Ono_Singh, Ballantine_2023,Craig,Han_2016,Han_2017,Petreolle}. In this article, we study the total number of hooks of fixed length in $t$-regular partitions along with the 
number of hooks of fixed length in ordinary partitions. We establish hook length biases in ordinary partitions and motivated by them we find a few hook length biases for $t$-regular partitions.  
\par For a positive integer $k$, let $p_{(k)}(n)$ denote the number of hooks of length $k$ in all the partitions of $n$. Han \cite{Han_2010} derived a two-variable generating function for the number of partitions of $n$ with $m$ hooks of length $k$, say $p_{(k)}(m,n)$:
\begin{align}\label{6.0_Han}
\mathcal{F}_k(z;q):=\sum_{n,m\geq0}p_{(k)}(m,n)z^mq^n=\frac{(-(z-1)q^k;q^k)^k_{\infty}}{(q;q)_{\infty}}.
\end{align}
From \eqref{6.0_Han}, we obtain the generating function for $p_{(k)}(n)$:
\begin{align}\label{6.1_Han}
\sum_{n=0}^{\infty}p_{(k)}(n)q^n=\frac{\partial}{\partial z}\bigg\vert_{z=1}\mathcal{F}_{k}(z;q)=\frac{1}{(q;q)_{\infty}}\left(\frac{kq^k}{1-q^k}\right).
\end{align}
Table \ref{Table6.2} exhibits the values of $p_{(k)}(n)$ for some values of $k$ and $n$. Since the Young diagram of a partition $\lambda$ of $n\leq k-1$ can have at most $k-1$ boxes, there is no hook of length $k$ for $\lambda$. Hence, $p_{(k)}(n)=0$ for all $n$ satisfying $1\leq n\leq k-1$. 
Also, observe that the values in every column of Table \ref{Table6.2} are in non-increasing order, except from the last non-zero value in the column. 
That is, counting in all the partitions of $n$, the number of hooks of length $k$ is at least the number of hooks of length $k+1$, with one exception of $n=k+1$. We confirm this in the following theorem.
\begin{theorem}\label{Thm6.04}
For any positive integer $k$, $p_{(k)}(n)\geq p_{(k+1)}(n)$ for all $n\geq0$ and $n\ne k+1$. For $k\geq 2$, we have $p_{(k)}(k+1)-p_{(k+1)}(k+1)=-1$.
\end{theorem}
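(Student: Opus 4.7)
The plan is to work directly from the generating function in \eqref{6.1_Han}. Since $1/(q;q)_\infty=\sum_n p(n)q^n$, convolving with $kq^k/(1-q^k)=k\sum_{j\geq 1}q^{jk}$ yields
\[
p_{(k)}(n)\;=\;k\sum_{\substack{j\geq 1\\ jk\leq n}}p(n-jk),
\]
so that
\[
p_{(k)}(n)-p_{(k+1)}(n)\;=\;k\sum_{\substack{j\geq 1\\ jk\leq n}}p(n-jk)\;-\;(k+1)\sum_{\substack{j\geq 1\\ j(k+1)\leq n}}p(n-j(k+1)).
\]

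From this formula, the small values of $n$ are dispatched by inspection. For $0\leq n\leq k-1$, both sums are empty and the difference is $0$. For $n=k$, only the first sum contributes (the $j=1$ term), giving $kp(0)=k\geq 0$. For $n=k+1$ with $k\geq 2$, each sum consists of only the $j=1$ term, producing $kp(1)-(k+1)p(0)=-1$, which is the second assertion of the theorem. (For $k=1$, $n=2$, the first sum carries an extra term and the difference is $0$, explaining why the identity is stated only for $k\geq 2$.)

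The substantive claim is $p_{(k)}(n)\geq p_{(k+1)}(n)$ for $n\geq k+2$. Setting $M=\lfloor n/k\rfloor$ and $M'=\lfloor n/(k+1)\rfloor$ (so that $M\geq M'\geq 1$ whenever $n\geq k+2$), I would decompose
\[
p_{(k)}(n)-p_{(k+1)}(n)=\sum_{j=1}^{M'}\bigl[kp(n-jk)-(k+1)p(n-j(k+1))\bigr]+k\sum_{j=M'+1}^{M}p(n-jk),
\]
in which the tail sum is manifestly non-negative. The principal obstacle is the bracketed sum: a termwise inequality $kp(n-jk)\geq(k+1)p(n-j(k+1))$ cannot be true in general, because $p(n-jk)/p(n-j(k+1))\to 1$ as $n-jk\to\infty$, so some individual brackets are negative. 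I would attempt to overcome this by amortization, rewriting
\[
p(n-jk)-p(n-j(k+1))=\sum_{i=1}^{j}\bigl[p(n-jk-i+1)-p(n-jk-i)\bigr]
\]
to express every bracket $k\bigl[p(n-jk)-p(n-j(k+1))\bigr]-p(n-j(k+1))$ as a signed combination of first differences of the partition function. Non-negativity of the total should then emerge by matching each negative $-p(n-j(k+1))$ against the non-negative first-difference pieces produced from the brackets and the tail; arranging this matching uniformly in $k$ and $n$ (so that the surplus of positive contributions always suffices) is the combinatorial heart of the argument and the step I expect to require the most care.
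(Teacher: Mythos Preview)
Your treatment of the small cases $n\leq k+1$ is correct and clean, and the convolution formula $p_{(k)}(n)=k\sum_{jk\leq n}p(n-jk)$ is the right starting point. However, the proposal does not prove the main inequality for $n\geq k+2$: you set up the decomposition, observe that the termwise bound fails, and then describe an \emph{intended} amortization scheme without actually executing it. The sentence ``arranging this matching uniformly in $k$ and $n$ \ldots\ is the step I expect to require the most care'' is an honest admission that the argument is incomplete. As written, there is no mechanism guaranteeing that the positive first-difference contributions $k[p(n-jk-i+1)-p(n-jk-i)]$ dominate the negative terms $-p(n-j(k+1))$; first differences of $p$ are of strictly smaller order than $p$ itself, so a naive count of positive versus negative pieces will not close the gap without a careful structural pairing that you have not supplied.

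By contrast, the paper avoids pointwise amortization entirely and works at the generating-function level. It writes $G_k(q)=\sum_n(p_{(k)}(n)-p_{(k+1)}(n))q^n$ over a common denominator $(1-q^{k^2+k})$, and after grouping terms expresses $G_k(q)$ as $\mathcal{K}_k(q)+\mathcal{L}_k(q)/\bigl[(1-q^{k^2+k})(q^2;q)_\infty\bigr]$, where $\mathcal{L}_k(q)$ is an explicit polynomial whose non-negativity (after dividing by $(q^2;q)_\infty$) follows from the elementary lemma that $a(n+1)\geq a(n)$ for $n\geq 2$, with $a(n)$ counting partitions with smallest part at least $2$. The single exceptional $-q^{k+1}$ is isolated inside $\mathcal{K}_k(q)$. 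The key device is thus the inequality $(q^\alpha-q^\beta)/(q^2;q)_\infty\succcurlyeq 0$ for $\beta>\alpha+1$, applied repeatedly to the polynomial $\mathcal{L}_k(q)$; this replaces your proposed first-difference matching with a uniform $q$-series positivity statement. If you want to rescue your approach, you would need to produce an explicit injection or weighting that plays the role of this lemma---as it stands, the core of the proof is missing.
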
 
For integers $t\geq2$ and $k\geq1$, let $b_{t,k}(n)$ denote the number of hooks of length $k$ in all the $t$-regular partitions of $n$. 
In \cite{Ballantine_2023}, Ballantine et al. studied hook lengths in partitions into odd parts, that is, in $2$-regular partitions. They derived the generating functions of $b_{2,2}(n)$ and $b_{2,3}(n)$, respectively:
\begin{align}
\sum_{n=0}^{\infty}b_{2,2}(n)q^n&=\frac{1}{(q;q^2)_{\infty}}\left(q^2+\sum_{n=2}^{\infty}\left(q^{2n-1}+q^{2(2n-1)}\right) \right),\label{6.0.1}\\
\sum_{n=0}^{\infty}b_{2,3}(n)q^n&=(-q^3;q)_{\infty}\frac{q^3(1+q^3)}{1-q^2}+(-q;q)_{\infty}\left(\frac{q^6}{1-q^4}+\frac{q^3}{1-q^6}\right).\label{6.0.2}
\end{align}
\begin{center}
\begin{table}
\caption{Values of $p_{(k)}(n): 1\leq n, k\leq 10$}\label{Table6.2}
\begin{tabular}[h]{c|cccccccccc}
$n\rightarrow$ & 1 & 2 & 3 & 4 & 5 & 6 & 7 & 8 & 9 & 10\\
\hline
$p_{(1)}(n)$ & 1 & 2 & 4 & 7 & 12 & 19 & 30 & 45 & 67 & 97\\
$p_{(2)}(n)$ & 0 & 2 & 2 & 6 & 8 & 16 & 22 & 38 & 52 & 82  \\
$p_{(3)}(n)$ & 0 & 0 & 3 & 3 & 6 & 12 & 18 & 27 & 45 & 63  \\
$p_{(4)}(n)$ & 0 & 0 & 0 & 4 & 4 & 8 & 12 & 24 & 32 & 52  \\
$p_{(5)}(n)$ & 0 & 0 & 0 & 0 & 5 & 5 & 10 & 15 & 25 & 40  \\
$p_{(6)}(n)$ & 0 & 0 & 0 & 0 & 0 & 6 & 6 & 12 & 18 & 30  \\
$p_{(7)}(n)$ & 0 & 0 & 0 & 0 & 0 & 0 & 7 & 7 & 14 & 21  \\
$p_{(8)}(n)$ & 0 & 0 & 0 & 0 & 0 & 0 & 0 & 8 & 8 & 16  \\
$p_{(9)}(n)$ & 0 & 0 & 0 & 0 & 0 & 0 & 0 & 0 & 9 & 9  \\
$p_{(10)}(n)$ & 0 & 0 & 0 & 0 & 0 & 0 & 0 & 0 & 0 & 10  
\end{tabular}
\end{table}
\end{center} 
Very recently, Craig et al. \cite{Craig} obtained the generating functions for $b_{2,k}(n)$, for any $k$. In this article, using the ideas of Ballantine et al. \cite{Ballantine_2023}, we obtain the generating functions of $b_{t,k}(n)$ for certain new values of $t$ and $k$. First, we find the generating function of $b_{t,1}(n)$ for all $t\geq 2$.
\begin{theorem}\label{Thm6.1}
For $t\geq2$, we have
\begin{align}\label{6.0.3}
\sum_{n=0}^{\infty}b_{t,1}(n)q^n=\frac{(q^t;q^t)_{\infty}}{(q;q)_{\infty}}\left(\frac{q}{1-q}-\frac{q^t}{1-q^t}\right).
\end{align}
\end{theorem}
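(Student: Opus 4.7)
The plan is to reduce the count $b_{t,1}(n)$ to a better-understood statistic, then use a standard two-variable generating function argument.

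First I would observe that a hook of length $1$ in the Young diagram of a partition $\lambda$ is precisely a box with no cell to its right and no cell below it; such a box is the rightmost cell of some row $i$ satisfying $\lambda_i>\lambda_{i+1}$ (taking $\lambda_{r+1}=0$). Consequently, the number of hooks of length $1$ in $\lambda$ equals the number of distinct part sizes of $\lambda$, which I will denote $d(\lambda)$. Hence
\begin{align*}
b_{t,1}(n)=\sum_{\lambda}d(\lambda),
\end{align*}
where the sum runs over all $t$-regular partitions $\lambda$ of $n$.

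Next I would introduce the bivariate generating function with $z$ marking distinct parts. Since a $t$-regular partition is determined by choosing, for each $k\geq 1$ with $t\nmid k$, a multiplicity $m_k\geq 0$ (with $m_k=0$ contributing $1$ and $m_k\geq 1$ contributing $z\,q^{km_k}$), we obtain
\begin{align*}
\sum_{\lambda\text{ }t\text{-regular}}z^{d(\lambda)}q^{|\lambda|}=\prod_{\substack{k\geq 1\\ t\nmid k}}\left(1+z\cdot\frac{q^k}{1-q^k}\right).
\end{align*}
Differentiating with respect to $z$, evaluating at $z=1$, and using $1+\frac{q^k}{1-q^k}=\frac{1}{1-q^k}$ would yield
\begin{align*}
\sum_{n=0}^{\infty}b_{t,1}(n)q^n=\left(\prod_{\substack{j\geq 1\\ t\nmid j}}\frac{1}{1-q^j}\right)\sum_{\substack{k\geq 1\\ t\nmid k}}q^k=\frac{(q^t;q^t)_\infty}{(q;q)_\infty}\sum_{\substack{k\geq 1\\ t\nmid k}}q^k.
\end{align*}

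Finally I would simplify the inner sum by subtracting the multiples of $t$ from all positive integers:
\begin{align*}
\sum_{\substack{k\geq 1\\ t\nmid k}}q^k=\sum_{k\geq 1}q^k-\sum_{k\geq 1}q^{tk}=\frac{q}{1-q}-\frac{q^t}{1-q^t},
\end{align*}
which recovers the claimed identity. The only real step with any subtlety is the combinatorial identification of hooks of length $1$ with the number of distinct parts; once that is in place the proof is a routine logarithmic differentiation of the infinite product for $t$-regular partitions, so I do not anticipate a serious obstacle.
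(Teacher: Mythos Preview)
Your proof is correct and follows essentially the same approach as the paper: identify hooks of length $1$ with distinct part sizes, form the bivariate product $\prod_{t\nmid n}(1+zq^n/(1-q^n))$, and apply logarithmic differentiation at $z=1$. The only cosmetic difference is that you spell out the simplification $\sum_{t\nmid k}q^k=\frac{q}{1-q}-\frac{q^t}{1-q^t}$ explicitly, whereas the paper leaves this as part of the routine computation.
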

In the following theorem, we find the generating functions of $b_{3,2}(n)$ and $b_{4,2}(n)$.
\begin{theorem}\label{Thm6.2}
We have
\begin{align}\label{6.0.4}
\sum_{n=0}^{\infty}b_{3,2}(n)q^n&=\frac{(q^3;q^3)_{\infty}}{(q;q)_{\infty}}\left(\frac{q^2}{1-q}+\frac{q^2}{1-q^2}-\frac{2q^3}{1-q^3}\right)
\end{align}
and
\begin{align}\label{6.0.5}
\sum_{n=0}^{\infty}b_{4,2}(n)q^n&=\frac{(q^4;q^4)_{\infty}}{(q;q)_{\infty}}\left(q^2+\sum_{\substack{n\geq2\\ 4\nmid n}}(q^n+q^{2n})-(q^4;q^8)_{\infty}\sum_{n=0}^{\infty}\frac{q^{8n+3}+q^{8n+5}}{1-q^{8n+4}}\right).
\end{align}
\end{theorem}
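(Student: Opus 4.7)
The plan is to follow the strategy of Ballantine et al.\ for $b_{2,2}$ in \cite{Ballantine_2023} and carry out a case analysis on the cells producing a hook of length~$2$.  Such a hook has arm--leg pair $(a,\ell)$ with $a+\ell=1$, so it is of Type~A ($a=1,\ell=0$) or Type~B ($a=0,\ell=1$).  Using $a(i,j)=\lambda_i-j$ and $\ell(i,j)=\lambda'_j-i$ together with the convention $\lambda_{\ell(\lambda)+1}=0$, a Type-A hook occurs at the cell $(i,\lambda_i-1)$ exactly when $\lambda_i\ge 2$ and $\lambda_{i+1}\le\lambda_i-2$; a Type-B hook occurs at $(i,\lambda_i)$ exactly when $\lambda_i=\lambda_{i+1}>\lambda_{i+2}$.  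Grouping by the value $m=\lambda_i$ of the relevant row, Type-A hooks in $\lambda$ correspond bijectively to values $m\ge 2$ such that $m$ is a part of $\lambda$ while $m-1$ is not, and Type-B hooks correspond bijectively to values $m\ge 1$ whose multiplicity in $\lambda$ is at least $2$.

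Writing $P_t(q):=(q^t;q^t)_\infty/(q;q)_\infty$ for the generating function of $t$-regular partitions, the Type-A contribution of a fixed value $m\ge 2$ with $t\nmid m$ is $P_t(q)\,q^m$ when $t\mid m-1$ (so that ``$m-1$ absent'' is automatic) and $P_t(q)\,q^m(1-q^{m-1})$ otherwise, while the Type-B contribution of $m\ge 1$ with $t\nmid m$ is $P_t(q)\,q^{2m}$.  Assembling everything yields
\[
\sum_{n=0}^{\infty} b_{t,2}(n)\,q^n \;=\; P_t(q)\!\left[\sum_{\substack{m\ge 2\\ t\nmid m}}q^m + \sum_{\substack{m\ge 1\\ t\nmid m}}q^{2m} - \sum_{\substack{m\ge 2\\ t\nmid m,\,t\nmid m-1}}q^{2m-1}\right].
\]

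For $t=3$, the correction sum ranges over $m\equiv 2\pmod 3$ with $m\ge 2$ and evaluates to $\tfrac{q^3}{1-q^6}$.  With the elementary evaluations $\sum_{m\ge 2,\,3\nmid m}q^m=\tfrac{q^2}{1-q}-\tfrac{q^3}{1-q^3}$ and $\sum_{m\ge 1,\,3\nmid m}q^{2m}=\tfrac{q^2}{1-q^2}-\tfrac{q^6}{1-q^6}$, the identity $\tfrac{1+q^3}{1-q^6}=\tfrac{1}{1-q^3}$ simplifies the bracket to $\tfrac{q^2}{1-q}+\tfrac{q^2}{1-q^2}-\tfrac{2q^3}{1-q^3}$, proving \eqref{6.0.4}.

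For $t=4$, the first two (unrestricted) sums combine into $q^2+\sum_{n\ge 2,\,4\nmid n}(q^n+q^{2n})$, which matches the first two terms of \eqref{6.0.5}.  The correction sum now ranges over $m\equiv 2,3\pmod 4$ with $m\ge 2$.  The main obstacle is then to recast this correction in the product--sum form $(q^4;q^8)_\infty\sum_{n\ge 0}\tfrac{q^{8n+3}+q^{8n+5}}{1-q^{8n+4}}$ stated in \eqref{6.0.5}; I expect this $q$-series reshaping --- which trades the naturally arising rational expression for a more structured product--sum --- to be the delicate step, likely requiring a Jacobi-type identity or a resummation technique analogous to that used by Ballantine et al.\ in their derivation of \eqref{6.0.2}.
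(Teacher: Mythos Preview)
Your combinatorial decomposition into Type-A and Type-B hooks is exactly the decomposition the paper uses (they phrase Type-A as ``parts $\lambda_i$ with $\lambda_i-\lambda_{i+1}\ge 2$'' and then rewrite this count as $\ell_{>1}(\lambda)-\ell_1(\lambda)$).  Where you differ is in the bookkeeping: the paper packages everything into two bivariate generating functions $\mathcal{B}^{1}_{t,2}(z;q)$ and $\mathcal{B}^{2}_{t,2}(z;q)$ and applies $\partial_z|_{z=1}$ via logarithmic differentiation, whereas you compute directly, for each value $m$, the generating function of $t$-regular partitions satisfying the relevant local condition on $m$ and $m{-}1$.  Your route is shorter and more transparent, and for $t=3$ it lands on \eqref{6.0.4} with no difficulty; the paper's route reaches the intermediate form $\frac{q^2}{1-q}+\frac{q^2}{1-q^2}-\frac{q^3}{1-q^3}-\frac{q^6}{1-q^6}-\frac{q^3}{1-q^6}$, which simplifies to the same thing.

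For $t=4$, however, your worry about the final ``reshaping'' is well founded --- but not for the reason you suspect.  Your derivation gives the correction term
\[
\sum_{\substack{m\ge 2\\ m\equiv 2,3\pmod 4}} q^{2m-1}\;=\;\frac{q^3+q^5}{1-q^8},
\]
and this is indeed the correct value of $P_4(q)^{-1}\sum_{\lambda}\ell_1(\lambda)\,q^{|\lambda|}$ (one also obtains it by carrying out the paper's logarithmic differentiation carefully).  But this is \emph{not} equal to the paper's expression $(q^4;q^8)_\infty\sum_{n\ge 0}\frac{q^{8n+3}+q^{8n+5}}{1-q^{8n+4}}$: the two series agree through $q^{14}$ and first differ at $q^{15}$, where the former has coefficient $0$ and the latter has coefficient $-2$.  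The discrepancy traces back to the paper's displayed $\mathcal{B}^{2}_{4,2}(z;q)$, which omits the case ``$4n{+}1$ and $4n{+}3$ both present, $4n{+}2$ absent'' and hence does not even specialize to $P_4(q)$ at $z=1$; the subsequent step \eqref{6.1.10} then acquires a spurious factor.  So there is no Jacobi-type identity to hunt for: your formula for $\sum_n b_{4,2}(n)q^n$ is the correct one, and the form stated in \eqref{6.0.5} is not.
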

We next study the hook length biases among $2$-regular partitions. In Table \ref{Table6.1}, we observe that the values in each column, after the first entry, are in a non-increasing fashion except from the last non-zero value in the column. 
Unlike the case of ordinary partitions, in 2-regular partitions of $n$, the bias between hooks of length 1 and hooks of length 2 is the opposite. More precisely, we have the following theorem.
\begin{theorem}\label{Thm6.5}
We have $b_{2,2}(n)\geq b_{2,1}(n)$, for all $n>4$.
\end{theorem}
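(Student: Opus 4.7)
The plan is to reduce the inequality to a concrete comparison of two partition-counting functions and then establish it via an explicit injection. First, specializing Theorem~\ref{Thm6.1} at $t=2$ and using the identity $\frac{(q^2;q^2)_{\infty}}{(q;q)_{\infty}} = \frac{1}{(q;q^2)_{\infty}}$ together with $\frac{q}{1-q} - \frac{q^2}{1-q^2} = \frac{q}{1-q^2}$, I would obtain
\[
\sum_{n\ge 0} b_{2,1}(n)\, q^n = \frac{q}{(1-q^2)(q;q^2)_{\infty}}.
\]
Subtracting this from \eqref{6.0.1} and simplifying $\frac{q^3-q}{1-q^2} = -q$, the difference becomes
\[
\sum_{n\ge 0}\bigl(b_{2,2}(n) - b_{2,1}(n)\bigr)\, q^n = \frac{1}{(q;q^2)_{\infty}}\Bigl(-q + q^2 + \tfrac{q^6}{1-q^4}\Bigr),
\]
so the theorem amounts to showing that the coefficient of $q^n$ on the right is non-negative for every $n > 4$.

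Next, I would extract a cleaner combinatorial formula by counting hooks directly in an odd-parts partition $\lambda$: each distinct part value of $\lambda$ contributes a single hook of length $1$ (at the corner of its bottom row), each distinct value $\ge 3$ contributes a single hook of length $2$ with arm $1$, and each distinct value of multiplicity $\ge 2$ contributes a single hook of length $2$ with leg $1$. Since the number of odd-parts partitions of $n$ in which a fixed odd value $v$ has multiplicity at least $k$ is $p_o(n - kv)$, summation gives
\[
b_{2,1}(n) = \sum_{v\ge 1,\, v\text{ odd}} p_o(n-v),\qquad b_{2,2}(n) = \sum_{v\ge 3,\, v\text{ odd}} p_o(n-v) + \sum_{v\ge 1,\, v\text{ odd}} p_o(n-2v).
\]
Subtracting and using the identity $p_o(n-1) - p_o(n-2) = p_{o, \ge 3}(n-1)$, where $p_{o, \ge 3}(m)$ counts partitions of $m$ into odd parts $\ge 3$, the theorem reduces (setting $m = n - 1$) to the partition inequality
\[
\sum_{k\ge 1} p_o(m - 4k - 1) \;\ge\; p_{o, \ge 3}(m)\qquad\text{for all } m \ge 4.
\]

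Finally, I would prove this inequality by an explicit injection sending a partition $\lambda$ of $m$ into odd parts $\ge 3$ to a pair $(k, \pi)$ with $k \ge 1$ and $\pi$ an odd-parts partition of $m - 4k - 1$, split according to the largest part $\lambda_1$: (i) if $\lambda_1 = 4K+1$ with $K \ge 1$, send $\lambda$ to $\bigl(K,\, \lambda \setminus \{\lambda_1\}\bigr)$; (ii) if $\lambda_1 = 4K+3$ with $K \ge 1$, send $\lambda$ to $\bigl(K,\, (\lambda \setminus \{\lambda_1\}) \cup \{1,1\}\bigr)$; (iii) if $\lambda_1 = 3$, so $\lambda = (3^r)$ with $r \ge 2$, send $\lambda$ to $\bigl(1,\, (3^{r-2}, 1)\bigr)$. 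Each image has the required weight, and the three cases are distinguished by the number of parts equal to $1$ in $\pi$ (namely $0$, $2$, and $1$ respectively), so their images are disjoint and each case is invertible on its own. The main obstacle is case (iii): the all-threes partitions have no large part to peel off, forcing a bespoke rule, and this is precisely where the hypothesis $m \ge 4$ (equivalently $n > 4$) enters, since $m = 3$ would correspond to the lone partition $\lambda = (3)$ for which no valid image $(k, \pi)$ exists.
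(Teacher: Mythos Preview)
Your argument is correct and takes a genuinely different route from the paper's proof. The paper argues partition-by-partition: it splits the odd partitions of $n$ into three classes $\mathcal{R}_n$ (some part repeated), $\mathcal{S}_n$ (distinct with a part $1$), and $\mathcal{T}_n$ (distinct without a part $1$), shows $h_2(\lambda)\ge h_1(\lambda)$ on $\mathcal{R}_n$, $h_1(\lambda)=h_2(\lambda)+1$ on $\mathcal{S}_n$, $h_1(\lambda)=h_2(\lambda)$ on $\mathcal{T}_n$, and then builds an injection $\Phi_n:\mathcal{S}_n\to\mathcal{R}_n$ whose image consists of partitions with $h_2=h_1+1$, so the deficit from $\mathcal{S}_n$ is absorbed. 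You instead pass through the generating functions to extract the closed-form identity
\[
b_{2,2}(n)-b_{2,1}(n)=\sum_{k\ge 1}p_o(n-4k-2)-p_{o,\ge 3}(n-1),
\]
and then prove the resulting stand-alone inequality $\sum_{k\ge 1}p_o(m-4k-1)\ge p_{o,\ge 3}(m)$ for $m\ge 4$ by a clean injection keyed on the residue of the largest part modulo $4$. The paper's approach has the virtue of never leaving the hook picture, and its injection is tailored to the structural reason the bias holds (repeated parts create surplus $2$-hooks). Your approach isolates a transparent partition inequality of independent interest, and your injection is arguably simpler to verify because the three cases are separated by the number of $1$'s in $\pi$. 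Both arguments pinpoint the same boundary phenomenon: the failure at $n=4$ (your $m=3$) is exactly the single partition $(3)$, for which neither injection has anywhere to go.
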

Theorem \ref{Thm6.5} motivates to study the biases among $b_{2,k}(n)$ or the difference $b_{2,k}(n)-b_{2,k+1}(n)$ for $k\geq2$. We find that the bias in Theorem \ref{Thm6.5} reverses for the next value of $k$. 
We prove that in 2-regular partitions of any non-negative integer $n$, the number of hooks of length 3 is no more than the number of hooks of length 2.
\begin{theorem}\label{Thm6.6}
We have $b_{2,2}(n)\geq b_{2,3}(n)$ for all $n\geq0$.
\end{theorem}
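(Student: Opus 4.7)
The plan is to prove the inequality at the generating-function level, by showing that
\[
D(q) := \sum_{n\geq 0}(b_{2,2}(n)-b_{2,3}(n))\,q^n
\]
has non-negative coefficients. First, I would rewrite \eqref{6.0.1} using Euler's identity $\frac{1}{(q;q^2)_\infty}=(-q;q)_\infty$ together with the geometric-series evaluations $\sum_{n\geq 2}q^{2n-1} = \frac{q^3}{1-q^2}$ and $\sum_{n\geq 2}q^{4n-2} = \frac{q^6}{1-q^4}$, obtaining
\[
\sum_{n\geq 0}b_{2,2}(n)\,q^n = (-q;q)_\infty\left(q^2+\frac{q^3}{1-q^2}+\frac{q^6}{1-q^4}\right).
\]
Subtracting \eqref{6.0.2} causes the two occurrences of $\frac{q^6}{1-q^4}$ to cancel, leaving
\[
D(q) = (-q;q)_\infty\left(q^2+\frac{q^3}{1-q^2}-\frac{q^3}{1-q^6}\right) - (-q^3;q)_\infty\cdot \frac{q^3(1+q^3)}{1-q^2}.
\]

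The second step is to factor the non-negative series $(-q^3;q)_\infty=\prod_{j\geq 3}(1+q^j)$ out of both terms, using $(-q;q)_\infty = (1+q)(1+q^2)(-q^3;q)_\infty$. This reduces the claim to showing that the rational function
\[
E(q) := (1+q)(1+q^2)\left(q^2+\frac{q^3}{1-q^2}-\frac{q^3}{1-q^6}\right) - \frac{q^3(1+q^3)}{1-q^2}
\]
has non-negative power-series coefficients.

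The main computation is then to place $E(q)$ over the common denominator $1-q^6=(1-q^2)(1+q^2+q^4)$ and collect terms. After this bookkeeping the numerator will reduce to $q^2+q^4+q^5+q^7-q^{10}-q^{11}$, and the key observation is the decomposition
\[
q^2+q^4+q^5+q^7-q^{10}-q^{11} = (q^2+q^7)+(q^4+q^5)(1-q^6),
\]
which yields
\[
E(q) = q^4+q^5+\frac{q^2+q^7}{1-q^6} = q^4+q^5+\sum_{m\geq 0}(q^{6m+2}+q^{6m+7}),
\]
a series that is manifestly non-negative. Multiplying back by $(-q^3;q)_\infty$ preserves non-negativity and proves the theorem.

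The main obstacle is spotting the factorization $q^{10}+q^{11}=q^6(q^4+q^5)$ that absorbs the negative monomials into the geometric series $\frac{1}{1-q^6}$; without it, non-negativity of $E(q)$ is not visible from its raw rational form. Everything else is mechanical expansion together with the elementary fact that the product $\prod_{j\geq 3}(1+q^j)$ has non-negative coefficients.
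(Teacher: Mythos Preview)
Your proposal is correct and follows essentially the same approach as the paper: both proofs rewrite the generating functions via Euler's identity, subtract, factor out an infinite product with non-negative coefficients, and reduce to a rational function whose numerator $q^2+q^4+q^5+q^7-q^{10}-q^{11}$ is shown to have non-negative expansion against $1-q^6$ (respectively $1-q^3$). The only cosmetic difference is that you factor out $(-q^3;q)_\infty$ and obtain $q^4+q^5+\frac{q^2+q^7}{1-q^6}$, whereas the paper factors out one more term to get $(-q^4;q)_\infty\bigl(q^4+q^5+q^7+q^8+\frac{q^2+q^7}{1-q^3}\bigr)$; these two expressions are equal upon multiplying yours by $1+q^3$.
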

We believe that the bias for $b_{2,k}(n)$ continues for the next values of $k$ as obtained in Theorem \ref{Thm6.6}. 
More precisely, we conjecture the following.
\begin{center}
\begin{table}
\caption{Values of $b_{2,k}(n): 1\leq n, k\leq 10$}\label{Table6.1}
\begin{tabular}[h]{c|cccccccccc}
$n\rightarrow$ & 1 & 2 & 3 & 4 & 5 & 6 & 7 & 8 & 9 & 10\\
\hline
$b_{2,1}(n)$ & \textcolor{red}{1} & 1 & 2 & 3 & 4 & 6 & 8 & 11 & 14 & 19\\
$b_{2,2}(n)$ & 0 & \textcolor{red}{1} & 2 & 2 & 4 & 6 & 8 & 11 & 15 & 20  \\
$b_{2,3}(n)$ & 0 & 0 & \textcolor{violet}{2} & 1 & 2 & 5 & 5 & 7 & 11 & 15  \\
$b_{2,4}(n)$ & 0 & 0 & 0 & \textcolor{violet}{2} & 2 & 3 & 5 & 5 & 10 & 13  \\
$b_{2,5}(n)$ & 0 & 0 & 0 & 0 & \textcolor{orange}{3} & 1 & 3 & 5 & 6 & 10  \\
$b_{2,6}(n)$ & 0 & 0 & 0 & 0 & 0 & \textcolor{orange}{3} & 2 & 4 & 5 & 7  \\
$b_{2,7}(n)$ & 0 & 0 & 0 & 0 & 0 & 0 & \textcolor{magenta}{4} & 1 & 4 & 5  \\
$b_{2,8}(n)$ & 0 & 0 & 0 & 0 & 0 & 0 & 0 & \textcolor{magenta}{4} & 2 & 5  \\
$b_{2,9}(n)$ & 0 & 0 & 0 & 0 & 0 & 0 & 0 & 0 & \textcolor{blue}{5} & 1  \\
$b_{2,10}(n)$ & 0 & 0 & 0 & 0 & 0 & 0 & 0 & 0 & 0 & \textcolor{blue}{5}  
\end{tabular}
\end{table}
\end{center}
\begin{conj}\label{conj6.1}
For every integer $k\geq3$, $b_{2,k}(n)\geq b_{2,k+1}(n)$ for all $n\geq0$ and $n\ne k+1$. 
\end{conj}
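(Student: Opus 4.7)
My plan is to adapt the generating-function approach that underlies Theorems \ref{Thm6.5} and \ref{Thm6.6}. Since Craig et al.\ have produced explicit generating functions for $b_{2,k}(n)$ for every $k$, I would begin by writing each such generating function in the form
\[
\sum_{n\ge 0} b_{2,k}(n)\, q^n \;=\; (-q;q)_{\infty}\, G_k(q),
\]
where $G_k(q)$ packages the contributions of cells of hook length $k$ in $2$-regular partitions as a combination of geometric and theta-like series. The problem then reduces to studying
\[
D_k(q) \;=\; \sum_{n\ge 0}\bigl(b_{2,k}(n) - b_{2,k+1}(n)\bigr)\, q^n \;=\; (-q;q)_{\infty}\bigl(G_k(q) - G_{k+1}(q)\bigr),
\]
and showing that, after subtracting off a single negative term at $q^{k+1}$ to account for the conjectured exception, the remaining series has non-negative coefficients. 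Since $(-q;q)_{\infty}$ itself has non-negative power series coefficients, it is enough to establish such non-negativity for the auxiliary factor $G_k(q)-G_{k+1}(q)$.

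Concretely, I would first read off from the explicit formulas the coefficient of $q^{k+1}$ in $D_k(q)$ to pin down the exceptional behaviour at $n=k+1$. Next I would try to decompose $G_k(q) - G_{k+1}(q) + c\, q^{k+1}$, for the appropriate constant $c$, into simple building blocks of the form $q^{a}/(1-q^{b})$ whose coefficients are manifestly non-negative. A finite check at small $n \le k+C$ for an explicit $C$ would then rule out cancellation pathologies near the exception, and the asymptotic non-negativity would come from a dominant positive block that visibly outweighs all smaller contributions.

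The principal obstacle is that the generating functions for $b_{2,k}(n)$ are not uniform in $k$: the structural shapes of \eqref{6.0.1} and \eqref{6.0.2} already differ markedly, and the general formula of Craig et al.\ is known to split according to $k$ modulo small integers. I therefore expect the proof to proceed case-by-case across residue classes of $k$, with a tailored non-negativity argument in each class; a second layer of difficulty is that different residue classes may produce different values of $c$ at the exceptional point $n=k+1$, as already visible from Table \ref{Table6.1} (for instance $b_{2,5}(6)-b_{2,6}(6)=-2$ rather than $-1$). As a secondary route, should the analytic approach become unwieldy, one could attempt a direct combinatorial injection from pairs $(\lambda,c)$ with $c$ a cell of hook length $k+1$ in a $2$-regular partition $\lambda\vdash n$ into pairs $(\mu,c')$ with $c'$ a cell of hook length $k$ in a $2$-regular partition $\mu\vdash n$; the delicate point there would be to move boxes within $\lambda$ while simultaneously preserving the weight $n$, preserving oddness of every part, and lowering the hook length at a prescribed cell by exactly one, with the exceptional value $n=k+1$ arising as the case where this map fails to be surjective.
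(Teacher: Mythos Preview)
The statement you are addressing is \emph{Conjecture}~\ref{conj6.1}; the paper offers no proof of it, only numerical evidence (Table~\ref{Table6.1}) and the exact values at $n=k+1$ from Theorem~\ref{Thm6.4} and \eqref{6.0.01}. There is therefore no ``paper's own proof'' to compare your proposal against---your plan is an attack on an open problem, not a re-derivation of a known argument.

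As a proof strategy your outline has a genuine gap in the handling of the exceptional coefficient. You write $D_k(q)=(-q;q)_\infty\bigl(G_k(q)-G_{k+1}(q)\bigr)$ and then say it suffices to show $G_k(q)-G_{k+1}(q)+c\,q^{k+1}\succcurlyeq 0$. But multiplying a series with a single negative coefficient by $(-q;q)_\infty$ spreads that negativity over \emph{all} higher powers: if $G_k-G_{k+1}=-c\,q^{k+1}+H(q)$ with $H\succcurlyeq 0$, then
\[
D_k(q)=-c\,q^{k+1}(-q;q)_\infty+(-q;q)_\infty H(q),
\]
and the first term contributes $-c\,p_d(m)$ to the coefficient of $q^{k+1+m}$ for every $m\ge 0$ (with $p_d$ the distinct-parts partition function). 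You would then need a further argument that $(-q;q)_\infty H(q)$ dominates this tail, which is not implied by $H\succcurlyeq 0$ alone. Compare the proof of Theorem~\ref{Thm6.04}, where the analogous decomposition is carried out on the \emph{full} difference $G_k(q)$ rather than on a factor, precisely so that the single term $-q^{k+1}$ is isolated after all cancellations. Note also from \eqref{6.0.01} that the deficit at $n=k+1$ equals $\lfloor (k-1)/2\rfloor$, so grows with $k$; any uniform argument must absorb an unbounded negative spike, which makes a clean ``dominant positive block'' harder to exhibit than in the ordinary-partition case.
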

\par Similar to the case of ordinary partitions, we notice certain patterns in the values in diagonal and upper diagonal of Table \ref{Table6.1}. In view of that, we prove the following result, 
which provides the exact values of $b_{2,k}(n)$, for $n=k,k+1$ (diagonal and upper diagonal entries in Table \ref{Table6.1}). 
\begin{theorem}\label{Thm6.4}
For every integer $k\geq1$, we have
\begin{align}
b_{2,k}(k)&=\left\{
\begin{array}{lll}
\frac{k+1}{2} & \text{if}\ k\equiv1\pmod2; \\
\frac{k}{2} & \text{if}\ k\equiv0\pmod2,	
\end{array}
\right.\label{6.0.41}\\
b_{2,k}(k+1)&=\left\{
\begin{array}{lll}
1 & \text{if}\ k\equiv1\pmod2; \label{6.0.42}\\
2 & \text{if}\ k\equiv0\pmod2.
\end{array}
\right.
\end{align}
\end{theorem}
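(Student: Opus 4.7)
The plan is to directly classify, for each of $n = k$ and $n = k+1$, the $2$-regular partitions $\lambda \vdash n$ that carry a hook of length $k$, and to verify each such partition contributes exactly one such hook.

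For $b_{2,k}(k)$: in any partition of $n$ the corner hook length $h_{1,1} = \lambda_1 + \ell(\lambda) - 1$ dominates every other hook length (since $h_{i,j} \le \lambda_1 + \ell(\lambda) - i - j + 1$), and $h_{1,1} \le n$ with equality exactly when $\lambda$ is a hook shape $(a, 1^{n-a})$. Hence a partition of $k$ contains a hook of length $k$ if and only if it is itself a hook shape, and it then contributes exactly one such hook (the corner one). The $2$-regular hook partitions of $k$ are $(a, 1^{k-a})$ with $a$ odd; counting odd $a$ in $\{1, \ldots, k\}$ gives $(k+1)/2$ for odd $k$ and $k/2$ for even $k$, establishing \eqref{6.0.41}.

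For $b_{2,k}(k+1)$: I would first restrict the positions $(i,j)$ at which a hook of length $k = n-1$ can sit in a partition $\lambda \vdash n$. The key observation is that the complement of such a hook inside the Young diagram has exactly one cell, because $n - h_{i,j} = 1$. Decomposing this complement into (a) the rows strictly above row $i$, (b) the cells of row $i$ strictly left of column $j$, and (c) the cells in rows strictly below $i$ off column $j$, and using the weakly-decreasing shape of $\lambda$ to bound region (a) from below by $i-1$ and region (b) exactly by $j-1$, forces $(i,j) \in \{(1,1), (1,2), (2,1)\}$.

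A short case analysis then pins down the partition shape in each case. Position $(1,1)$ forces $\lambda = (\lambda_1, 2, 1^{\ell - 2})$ with $\lambda_1 \ge 2$, $\ell \ge 2$; every such shape contains the part $2$ and so is never $2$-regular. Position $(1,2)$ forces $\lambda = (k+1)$, which is $2$-regular iff $k+1$ is odd, i.e., $k$ is even. Position $(2,1)$ forces $\lambda = (1^{k+1})$, which is always $2$-regular. Since each qualifying partition contributes one hook of length $k$, we obtain $b_{2,k}(k+1) = 2$ for even $k$ (from $(k+1)$ and $(1^{k+1})$) and $b_{2,k}(k+1) = 1$ for odd $k$ (only $(1^{k+1})$), establishing \eqref{6.0.42}. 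The main obstacle is the position-classification step: showing the single excess cell of the complement forces $(i,j)$ into one of three admissible positions requires a careful split between $j = 1$ and $j \ge 2$, together with the monotonicity bounds; once this is in place, the shape identification and the final parity count are mechanical.
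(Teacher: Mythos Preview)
Your proposal is correct and follows essentially the same route as the paper: for $n=k$ you reduce to hook shapes $(a,1^{k-a})$ with $a$ odd, and for $n=k+1$ you arrive at the same trichotomy (horizontal strip, vertical strip, or an inverted-L shape $(\lambda_1,2,1^{\ell-2})$) that the paper uses, discarding the last as it contains an even part. Your complement-size argument for restricting the hook position to $(1,1),(1,2),(2,1)$ is a more explicit justification than the paper gives, but the underlying classification and the final parity count are the same.
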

From Theorem \ref{Thm6.4} we evaluate that for $k\geq1$
\begin{align}
b_{2,k}(k+1)-b_{2,k+1}(k+1)=\left\{
\begin{array}{lll}
-\frac{k-1}{2} & \text{if}\ k\equiv1\pmod2; \\
-\frac{k-2}{2} & \text{if}\ k\equiv0\pmod2,\label{6.0.01}	
\end{array}
\right.
\end{align}
Theorem \ref{Thm6.4} motivates us to extend \eqref{6.0.41} and \eqref{6.0.42} for any $t$-regular partitions. We find the exact values of $b_{t,k}(k)$ and $b_{t,k}(k+1)$ for any $t\geq3$ in the following result.
\begin{theorem}\label{Thm6.4a}
Let $t\geq3$ be an integer. Then 
\begin{enumerate}
\item for any positive integer $k\equiv r\pmod{t}$
\begin{align}
b_{t,k}(k)=\frac{(t-1)k+r}{t};\label{6.0.41a}
\end{align}
\item for any integer $k\geq4$
\begin{align}
b_{t,k}(k+1)&=\left\{
\begin{array}{lll}
\frac{(t-1)k+t}{t} & \text{if}\ k\equiv0\pmod{t}; \label{6.0.42a}\vspace{.12cm}\\
\frac{(t-1)k+r}{t} & \text{if}\ k\equiv r\pmod{t}\ \text{for}\ 1\leq r\leq t-2;\vspace{.12cm}\\
\frac{(t-1)k-1}{t} & \text{if}\ k\equiv t-1\pmod{t}.
\end{array}
\right.
\end{align}
Also, $b_{t,1}(2)=2$, for any $t\geq3$; $b_{3,2}(3)=1$; $b_{t,2}(3)=2$, for any $t>3$; $b_{3,3}(4)=2$; $b_{4,3}(4)=2$; and $b_{t,3}(4)=3$, for any $t>4$.
\end{enumerate}
\end{theorem}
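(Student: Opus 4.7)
The approach is, in both parts, to first classify the partitions $\lambda \vdash n$ with $n \in \{k, k+1\}$ that contain at least one hook of length $k$, and then restrict the classification to $t$-regular shapes and count.

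For Part (1), a hook of length $k$ inside a partition of size $k$ must occupy every cell of the diagram, so $\lambda$ is forced to be a hook shape $(a, 1^{k-a})$ with $1 \leq a \leq k$; its unique hook of length $k$ sits at $(1,1)$. Such a shape is $t$-regular iff $t \nmid a$ (the remaining parts are all $1$). Writing $k = qt + r$ with $0 \leq r \leq t-1$, the number of admissible $a$'s is $k - \lfloor k/t \rfloor = k - q = q(t-1) + r = \tfrac{(t-1)k + r}{t}$, which is \eqref{6.0.41a}.

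For Part (2), the key classification I would establish is that a partition of $k+1$ contains a hook of length $k$ iff it is one of: (i) the row $(k+1)$, with the hook at $(1,2)$; (ii) the column $(1^{k+1})$, with the hook at $(2,1)$; or (iii) a near-hook $(a, 2, 1^{k-1-a})$ for $2 \leq a \leq k-1$, with the hook at $(1,1)$; and in each case exactly one such hook appears. To prove this, I consider the location $(i_0, j_0)$ of a putative hook of length $k$. Since the hook contains $k$ of the $k+1$ cells of $\lambda$, exactly one cell lies outside it. When $(i_0, j_0) = (1,1)$, validity of the Young diagram forces the stray cell to be $(2,2)$, yielding $\lambda = (a, 2, 1^{k-1-a})$. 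When $i_0 = 1$ and $j_0 \geq 2$, the cells $(1,1), \ldots, (1, j_0 - 1)$ are outside the hook, forcing $j_0 = 2$; and if there is a second row, $(2,1)$ provides an additional stray, so $\ell(\lambda) = 1$ and $\lambda = (k+1)$. The case $i_0 \geq 2$, $j_0 = 1$ is symmetric and yields $\lambda = (1^{k+1})$. Finally, if $i_0, j_0 \geq 2$, the first row alone contributes $\lambda_1 \geq j_0 \geq 2$ outside cells and the cells $(i_0, 1), \ldots, (i_0, j_0 - 1)$ add at least one more, for a total $\geq 3$, contradicting the requirement of exactly one.

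With the classification in hand, the $t$-regular count is routine: (i) contributes iff $t \nmid (k+1)$, (ii) always contributes, and (iii) contributes iff $t \nmid a$ (the part $2$ is harmless since $t \geq 3$). Writing $k = qt + r$ and summing yields
\[
b_{t,k}(k+1) \;=\; \mathbf{1}_{\{t \nmid (k+1)\}} + 1 + (k - 2) - \lfloor (k-1)/t \rfloor,
\]
and a case split on $r = 0$, $1 \leq r \leq t - 2$, and $r = t - 1$ collapses this into the three branches of \eqref{6.0.42a}; the hypothesis $k \geq 4$ ensures that the near-hook family is nonempty and that each residue case actually occurs. The exceptional small-$k$ values are handled by direct enumeration of the short admissible lists from the same classification, filtered for $t$-regularity. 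The principal obstacle is the classification in Part (2), particularly ruling out an interior hook location $(i_0, j_0 \geq 2)$ and the boundary case $i_0 = 1, j_0 \geq 2$; once those are settled, the remainder reduces to arithmetic on $k \bmod t$.
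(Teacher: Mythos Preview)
Your approach is correct and essentially identical to the paper's: both arguments classify the partitions of $k$ (resp.\ $k+1$) admitting a hook of length $k$ as hook shapes $(a,1^{k-a})$ (resp.\ the row $(k+1)$, the column $(1^{k+1})$, and the near-hooks $(a,2,1^{k-1-a})$), and then count the $t$-regular ones by residue arithmetic on $k\bmod t$. Your justification of the Part~(2) classification via the cell-count at the hook location $(i_0,j_0)$ is in fact more detailed than the paper's, which simply asserts the three admissible shapes.
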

The rest of this paper is organized as follows. In Section 2 we prove Theorem \ref{Thm6.04} which gives certain hook length biases in ordinary partition functions. In
Section 3 we derive the generating functions of $b_{t,1}(n)$ for all $t\geq2$, namely we prove Theorem \ref{Thm6.1}. We also prove Theorem \ref{Thm6.2} which gives the generating functions for $b_{3,2}(n)$ and $b_{4,2}(n)$.  In Section 4 we prove the hook length biases in $2$-regular partitions, namely Theorems \ref{Thm6.5} and \ref{Thm6.6}. In Section 5 we prove Theorems \ref{Thm6.4} and \ref{Thm6.4a} which give values of $b_{t,k}(k)$ and $b_{t,k}(k+1)$. In Section 6 we make some concluding remarks and state some problems for future works.
\section{Hook length biases in ordinary partitions}\label{Section6.2}
In this section we prove Theorem \ref{Thm6.04}. We first discuss some prerequisites. We say that a series $S(q):=\sum_{n\geq0}s_nq^n$ is \emph{non-negative} if $s_n\geq0$, for all $n$. We use the notation $S(q)\succcurlyeq0$ to denote the non-negativity.
\par Let $a(n)$ denote the number of partitions of $n$ with smallest part at least 2. Define $a(0):=1$. Then the generating function for $a(n)$ is 
$$\sum_{n=0}^{\infty}a(n)q^n=\frac{1}{(q^2;q)_{\infty}}.$$
\begin{lemma}\label{Lemma6.3.1}
	For all positive integers $n\geq2$, $a(n+1)\geq a(n)$.
\end{lemma}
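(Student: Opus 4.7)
The plan is to exhibit an injection from the set of partitions of $n$ counted by $a(n)$ into the set of partitions of $n+1$ counted by $a(n+1)$, which immediately yields $a(n)\le a(n+1)$. Since $n\ge 2$, every such partition $\lambda=(\lambda_1,\lambda_2,\ldots,\lambda_r)$ has at least one part, so the natural candidate is the ``add one to the largest part'' map
\[
\phi(\lambda_1,\lambda_2,\ldots,\lambda_r) := (\lambda_1+1,\lambda_2,\ldots,\lambda_r).
\]

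Next I would verify that $\phi$ is well-defined and injective. Weakly decreasing order is preserved because $\lambda_1+1>\lambda_1\ge\lambda_2$; the smallest part of $\phi(\lambda)$ equals $\lambda_r\ge 2$; and the size increases by exactly one, so $\phi(\lambda)$ is enumerated by $a(n+1)$. For injectivity, observe that the new largest part $\lambda_1+1$ is \emph{strictly} greater than $\lambda_2,\lambda_3,\ldots,\lambda_r$, hence it appears uniquely in $\phi(\lambda)$; decrementing that distinguished entry recovers $\lambda$ unambiguously.

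There is essentially no serious obstacle in this approach: the proof is a one-line injection together with trivial verifications. An alternative route would first establish the identity $a(n)=p(n)-p(n-1)$, immediate from $\sum a(n)q^n = (1-q)\sum p(n)q^n$, thereby reducing the inequality to the second-difference statement $p(n+1)-2p(n)+p(n-1)\ge 0$, i.e.\ the convexity of the partition function. This could again be proved by an injection or by invoking known results, but it is a longer detour than the direct map $\phi$ above, so I would present the direct injection.
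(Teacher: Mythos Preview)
Your proof is correct and is essentially identical to the paper's own argument: both define the injection $(\lambda_1,\lambda_2,\ldots,\lambda_r)\mapsto(\lambda_1+1,\lambda_2,\ldots,\lambda_r)$ from partitions of $n$ with smallest part $\ge 2$ into those of $n+1$. Your additional remarks on the alternative route via $a(n)=p(n)-p(n-1)$ go beyond what the paper records, but the core approach is the same.
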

\begin{proof}
	For a positive integer $n\geq2$, let $\mathcal{P}^1(n)$ denote the set of all partitions of $n$ in which smallest part is at least 2. We define a map $\Psi_n:\mathcal{P}^1(n)\rightarrow\mathcal{P}^1(n+1)$ by
	$$\Psi_n((\lambda_1,\lambda_2,\ldots,\lambda_r))=(\lambda_1+1,\lambda_2,\ldots,\lambda_r).$$
	Clearly, $\Psi_n$ is a one-to-one map. Thus, $|\mathcal{P}^1(n)|\leq|\mathcal{P}^1(n+1)|$ and therefore $a(n)\leq a(n+1)$. 
\end{proof}
\begin{corollary}\label{Cor6.3.1}
	For any positive integer $k$, we have
	$$\frac{q^k-q^{k+1}}{(q^2;q)_{\infty}}=-q^{k+1}+H_k(q),$$
	where $H_k(q)\succcurlyeq0$.
\end{corollary}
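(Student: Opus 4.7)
The plan is to expand the generating function $(q^2;q)_\infty^{-1}=\sum_{n\geq0}a(n)q^n$ and then simply read off coefficients after multiplying by $q^k-q^{k+1}$, invoking Lemma \ref{Lemma6.3.1} to control the tail.

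First I would record the initial values: by counting, $a(0)=1$, $a(1)=0$, and $a(n)\geq 0$ for all $n$. Then, multiplying the series for $1/(q^2;q)_\infty$ by $q^k-q^{k+1}$, the coefficient of $q^m$ in $\dfrac{q^k-q^{k+1}}{(q^2;q)_\infty}$ equals $a(m-k)-a(m-k-1)$, with the convention that $a(j)=0$ for $j<0$. I would then split into cases according to $m$:
\begin{align*}
[q^{k}]\, :&\ a(0)=1,\\
[q^{k+1}]\, :&\ a(1)-a(0)=-1,\\
[q^{k+2}]\, :&\ a(2)-a(1)=a(2)\geq 0,\\
[q^{m}],\ m\geq k+3\, :&\ a(m-k)-a(m-k-1)\geq 0,
\end{align*}
where the last inequality uses Lemma \ref{Lemma6.3.1} since $m-k-1\geq 2$.

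Combining these, I would write
\[
\frac{q^k-q^{k+1}}{(q^2;q)_{\infty}} \;=\; q^{k}-q^{k+1}+\sum_{m\geq k+2}\bigl(a(m-k)-a(m-k-1)\bigr)q^{m},
\]
and then regroup by setting
\[
H_k(q):=q^{k}+\sum_{m\geq k+2}\bigl(a(m-k)-a(m-k-1)\bigr)q^{m},
\]
so that the identity becomes $-q^{k+1}+H_k(q)$ as claimed, with $H_k(q)\succcurlyeq 0$ term-by-term.

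There is essentially no serious obstacle here; the only subtlety is remembering that $a(1)=0$ (no partition of $1$ has smallest part $\geq 2$), which is exactly what makes the coefficient of $q^{k+2}$ harmless and lets Lemma \ref{Lemma6.3.1} take over from $q^{k+3}$ onward. Everything else is a bookkeeping exercise in coefficient comparison.
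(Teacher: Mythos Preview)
Your proof is correct and follows essentially the same approach as the paper: both expand $(q^2;q)_\infty^{-1}=\sum_{n\geq0}a(n)q^n$, multiply by $q^k-q^{k+1}$, read off the first few coefficients using $a(0)=1$, $a(1)=0$, and then invoke Lemma~\ref{Lemma6.3.1} to handle the tail. The paper additionally records $a(2)=1$ explicitly, but this is immaterial to the argument.
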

\begin{proof}
	We have
	\begin{align*}
		\frac{q^k-q^{k+1}}{(q^2;q)_{\infty}}&=q^k\left(\sum_{n\geq0}(a(n)-a(n-1))q^n\right)\\
		&=q^k-q^{k+1}+q^{k+2}+\sum_{n\geq3}(a(n)-a(n-1))q^{n+k}.
	\end{align*}
	The proof follows from Lemma \ref{Lemma6.3.1}.
\end{proof}
We will repeatedly use the fact that for positive integers $\alpha,\ \beta$ with $\beta>\alpha+1$
\begin{align}\label{6.3.1}
	\frac{q^{\alpha}-q^{\beta}}{(q^2;q)_{\infty}}\succcurlyeq0.
\end{align}
Note that \eqref{6.3.1} is also an immediate consequence of Lemma \ref{Lemma6.3.1}.
\par We are now ready to prove Theorem \ref{Thm6.04}.
\begin{proof}[Proof of Theorem \ref{Thm6.04}] 
From \eqref{gen_fn_p(n)} and \eqref{6.1_Han}, we have
\begin{align}\label{6.3b}
\sum_{n=0}^{\infty}p_{(k)}(n)q^n=\sum_{r=0}^{\infty}p(r)q^r\left(\sum_{m=1}^{\infty}kq^{km} \right). 
\end{align}
If $k>r\geq 0$, then comparing the coefficients of $k+r$ on both sides of \eqref{6.3b} yields
\begin{align}\label{6.3c}
p_{(k)}(k+r)=p(r)\cdot k.
\end{align}
If $k\geq 2$, then taking $r=0,1$ in \eqref{6.3c}, we find that $p_{(k)}(k+1)-p_{(k+1)}(k+1)=-1$.
\par 
Next, to prove the biases, we define 
$$G_k(q):=\sum_{n=0}^{\infty}\left(p_{(k)}(n)-p_{(k+1)}(n)\right)q^n.$$
From \eqref{6.1_Han} we have
\begin{align}\label{6.3.5}
G_k(q)&=\frac{1}{(q;q)_{\infty}} \left(\frac{kq^k}{1-q^k}-\frac{(k+1)q^{k+1}}{1-q^{k+1}}\right)\nonumber\\
&=\frac{1}{(q;q)_{\infty}} \left(k\frac{q^k+q^{2k}+\cdots+q^{(k+1)k}}{1-q^{k^2+k}}-(k+1)\frac{q^{k+1}+q^{2(k+1)}+\cdots+q^{k(k+1)}}{1-q^{k^2+k}}\right)\nonumber\\
&=\frac{1}{(1-q^{k^2+k})(q;q)_{\infty}}\times\nonumber\\
&\hspace{.5cm}\left[k\left(q^k+q^{2k}+\cdots+q^{(k-1)k}-q^{k+1}-q^{2(k+1)}-\cdots-q^{(k-1)(k+1)}\right)\right.\nonumber\\
&\hspace{.5cm}\left.+\left(kq^{k^2}-q^{k+1}-q^{2(k+1)}-\cdots-q^{(k-1)(k+1)}-q^{k(k+1)}\right)\right]\nonumber\\
&=\frac{1}{(1-q^{k^2+k})(q^2;q)_{\infty}}\times\nonumber\\
&\hspace{.5cm}\left( k\left[\left(\frac{q^k-q^{k+1}}{1-q}\right)+\left(\frac{q^{2k}-q^{2(k+1)}}{1-q}\right)+\cdots+\left(\frac{q^{(k-1)k}-q^{(k-1)(k+1)}}{1-q}\right) \right]\right.  \nonumber\\
&\hspace{.5cm}+\left[\left(\frac{q^{k^2}-q^{k+1}}{1-q}\right)+\left(\frac{q^{k^2}-q^{2(k+1)}}{1-q}\right)+\cdots+\left(\frac{q^{k^2}-q^{(k-1)(k+1)}}{1-q}\right)\right]\nonumber\\
&\hspace{.5cm}\left.+\left(\frac{q^{k^2}-q^{k(k+1)}}{1-q}\right)\right)\nonumber\\
&=\frac{1}{(1-q^{k^2+k})(q^2;q)_{\infty}}\left( k\left[q^k+(q^{2k}+q^{2k+1})+(q^{3k}+q^{3k+1}+q^{3k+2})+\cdots+\right.\right.\nonumber\\
&\left.\hspace{.5cm} (q^{(k-1)k}+q^{(k-1)k+1}+\cdots+q^{(k-1)k+k-2})\right]-\left[(q^{k+1}+q^{k+2}+\cdots+q^{k^2-1})\right.\nonumber\\
&\left.\left. \hspace{.5cm}+(q^{2(k+1)}+q^{2(k+1)+1}+\cdots+q^{k^2-1})+\cdots+q^{k^2-1}\right]+\frac{q^{k^2}-q^{k^2+k}}{1-q}\right)\nonumber\\
&=\mathcal{K}_k(q)+\frac{\mathcal{L}_k(q)}{(1-q^{k^2+k})(q^2;q)_{\infty}},
\end{align}
where
\begin{align*}
\mathcal{K}_k(q):=\frac{q^k-q^{k+1}}{(1-q^{k^2+k})(q^2;q)_{\infty}}+\frac{q^{k^2}-q^{k^2+k}}{(1-q^{k^2+k})(q;q)_{\infty}}
\end{align*}
and
\begin{align}\label{new-eqn-01}
\mathcal{L}_k(q)&:=(k-1)q^k+k\left[(q^{2k}+q^{2k+1})+(q^{3k}+q^{3k+1}+q^{3k+2})+\cdots+\right.\nonumber\\
&\left.\hspace{.5cm} (q^{(k-1)k}+q^{(k-1)k+1}+\cdots+q^{(k-1)k+k-2})\right]\nonumber\\
&\hspace{.5cm} -\left[(q^{k+2}+q^{k+3}+\cdots+q^{k^2-1})+\right.\nonumber\\
&\left. \hspace{.5cm}(q^{2(k+1)}+q^{2(k+1)+1}+\cdots+q^{k^2-1})+\cdots+q^{k^2-1}\right].
\end{align}
Enumerating the terms in the second square bracket in the right hand side of \eqref{new-eqn-01}, we obtain
\begin{align}\label{6.3.2}
\mathcal{L}_k(q)&=(k-1)q^k+k\left[(q^{2k}+q^{2k+1})+(q^{3k}+q^{3k+1}+q^{3k+2})+\cdots+\right.\nonumber\\
&\left.\hspace{.5cm}(q^{(k-1)k}+q^{(k-1)k+1}+\cdots+q^{(k-1)k+k-2})\right]\nonumber\\
&\hspace{.5cm}-\left[(q^{k+2}+q^{k+3}+\cdots+q^{2k+1})+2(q^{2k+2}+q^{2k+3}+\cdots+q^{3k+2})+\cdots+\right.\nonumber\\
&\hspace{.5cm}(k-2)(q^{(k-2)(k+1)}+q^{(k-2)(k+1)+1}+\cdots+q^{(k-1)(k+1)-1})\nonumber\\
&\left.\hspace{.5cm}+(k-1)q^{(k-1)(k+1)}\right]\nonumber\\
&=\left[(k-1)q^k-(q^{k+2}+q^{k+3}+\cdots+q^{2k-1})\right]+\nonumber\\
&\hspace{.5cm}\left[(k-1)(q^{2k}+q^{2k+1})-2(q^{2k+2}+q^{2k+3}+\cdots+q^{3k-1})\right]+\nonumber\\
&\hspace{.5cm}\left[(k-2)(q^{3k}+q^{3k+1}+q^{3k+2})-3(q^{3k+3}+q^{3k+4}+\cdots+q^{4k-1})\right]+\cdots+\nonumber\\
&\hspace{.5cm}\left[2(q^{(k-1)k}+q^{(k-1)k+1}+\cdots+q^{(k-1)k+(k-2)})-(k-1)(q^{(k-1)(k-2)})\right].
\end{align}
Using \eqref{6.3.1} repeatedly on the right hand side of \eqref{6.3.2}, we obtain that
\begin{align}\label{6.3.3}
\frac{\mathcal{L}_k(q)}{(q^2;q)_{\infty}}\succcurlyeq0.
\end{align}
Also,
\begin{align*}
\mathcal{K}_k(q)=&\frac{q^k-q^{k+1}}{(q^2;q)_{\infty}}\left(1+\frac{q^{k^2+k}}{1-q^{k^2+k}}\right) +\frac{q^{k^2}-q^{k^2+k}}{(1-q^{k^2+k})(q;q)_{\infty}}\\
=&\frac{q^k-q^{k+1}}{(q^2;q)_{\infty}}+\frac{1}{(1-q^{k^2+k})(q^2;q)_{\infty}}\times\\
&\left(q^{k^2+2k}-q^{k^2+2k+1}+(q^{k^2}+q^{k^2+1}+\cdots+q^{k^2+k-1})\right).
\end{align*}
From Corollary \ref{Cor6.3.1} and using \eqref{6.3.1} on $\frac{q^{k^2}-q^{k^2+2k+1}}{(q^2;q)_{\infty}}$, we obtain that
\begin{align}\label{6.3.4}
\mathcal{K}_k(q)=-q^{k+1}+J_k(q),
\end{align}
where $J_k(q)\succcurlyeq0$.
Invoking \eqref{6.3.3} and \eqref{6.3.4} in \eqref{6.3.5} yields
\begin{align*}
G_k(q)=-q^{k+1}+M_k(q),
\end{align*}
where $M_k(q)\succcurlyeq0$. This completes the proof.
\end{proof}
\section{Generating functions for $b_{t,k}(n)$}\label{Section6.1}
In this section, we derive the generating functions of $b_{t,1}(n)$ for all $t\geq2$. We also derive the generating functions for $b_{3,2}(n)$ and $b_{4,2}(n)$. 
First, we introduce some notations that would be useful in this section. For any part $s$ of a partition $\lambda$, we denote by $m_{\lambda}(s)$ the multiplicity of $s$ in $\lambda$, i.e., the number of times $s$ appears as a part in $\lambda$. 
For a positive integer $j$, let $\ell_{>j}(\lambda)$ denote the number of part sizes greater than $j$ in the partition $\lambda$. By $\ell_1(\lambda)$ (respectively $\ell_2(\lambda)$) 
we denote the number of parts $\lambda_i$ of $\lambda$ with $\lambda_i-\lambda_{i+1}=1$ (respectively $\lambda_i-\lambda_{i+1}=2$), where we take $\lambda_i=0$ if $i>\ell(\lambda)$. 
Let $\delta_P$ denote the Kronecker delta symbol, which is equal to 1 if property $P$ is true and 0 otherwise.
\par Let $b_{t,k}(m,n)$ denote the number of $t$-regular partitions of $n$ with $m$ hooks of length $k$. Then,
\begin{align}\label{6.1.1}
b_{t,k}(n)=\sum_{m=0}^{\infty}mb_{t,k}(m,n).
\end{align}
\begin{proof}[Proof of Theorem \ref{Thm6.1}]
In the Young diagram of a partition $\lambda$, there is a box with hook length 1 if and only if that box is at the end of a row with no box directly below it. 
Therefore, the number of hooks of length 1 in a partition $\lambda$ is same as the number of distinct parts in $\lambda$. Thus, 
\begin{align*}
\mathcal{B}_{t,1}(z;q):=\sum_{n,m\geq0}b_{t,1}(m,n)z^mq^n=\prod_{\substack{n\geq1\\ t\nmid n}}\left(1+\frac{zq^n}{1-q^n}\right).
\end{align*}
From \eqref{6.1.1}, we have
\begin{align}\label{6.1.2}
\sum_{n=0}^{\infty}b_{t,1}(n)q^n=\frac{\partial}{\partial z}\bigg\vert_{z=1}\mathcal{B}_{t,1}(z;q).
\end{align}
Using logarithmic differentiation, \eqref{6.1.2} yields \eqref{6.0.3}.
\end{proof}
\begin{proof}[Proof of Theorem \ref{Thm6.2}]
Let $\lambda$ be a 3-regular partition. We observe that the number of hooks of length 2 in $\lambda$ is equal to the number of different parts of $\lambda$ that occur at 
least twice plus the number of parts $\lambda_i$ of $\lambda$ for which $\lambda_i-\lambda_{i+1}\geq2$, where we take $\lambda_i=0$ if $i>\ell(\lambda)$. For example, see Figure \ref{Figure6.2}.
\begin{figure}[h]
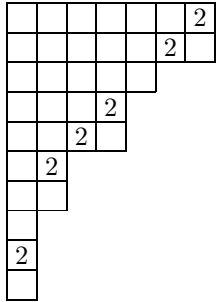

\centering
\[\young(~~~~~~2,~~~~~2~,~~~~~,~~~2,~~2~,~2,~~,~,2,~)\]
\caption{The hooks of length 2 in the Young diagram of the partition\label{Figure6.2} $(7,7,5,4,4,2,2,1,1,1)$}
\end{figure}
 The number of parts $\lambda_i$ of $\lambda$ for which $\lambda_i-\lambda_{i+1}\geq2$ is nothing but 
\begin{align*}
\ell_{>1}(\lambda)-\ell_1(\lambda)=\sum_{s\geq2}\delta_{m_{\lambda}(s)\geq1}-\ell_1(\lambda).
\end{align*}
Thus, $b_{3,2}(m,n)$ is equal to the number of 3-regular partitions $\lambda$ of $n$ such that
\begin{align*}
\delta_{m_{\lambda}(1)\geq2}+\sum_{s\geq2}\left( \delta_{m_{\lambda}(s)\geq1}+\delta_{m_{\lambda}(s)\geq2}\right)-\ell_1(\lambda)=m.
\end{align*}
Next, define $u(r,n)$ to be the number of 3-regular partitions $\lambda$ of $n$ such that
\begin{align*}
\delta_{m_{\lambda}(1)\geq2}+\sum_{s\geq2}\left( \delta_{m_{\lambda}(s)\geq1}+\delta_{m_{\lambda}(s)\geq2}\right)=r
\end{align*}
and $v(r,n)$ to be the number of 3-regular partitions $\lambda$ of $n$ such that $\ell_1(\lambda)=r$.
Notice that
\begin{align}\label{6.1.3}
b_{3,2}(n)=\sum_{m=0}^{\infty}mb_{3,2}(m,n)=\sum_{r=0}^{\infty}ru(r,n)-\sum_{r=0}^{\infty}rv(r,n).
\end{align}
If we define $\mathcal{B}^{1}_{3,2}(z;q):=\displaystyle\sum_{n,r\geq0}u(r,n)z^rq^n$ and $\mathcal{B}^{2}_{3,2}(z;q):=\displaystyle\sum_{n,r\geq0}v(r,n)z^rq^n$, then using \eqref{6.1.3}, we arrive at
\begin{align}\label{6.1.4}
\sum_{n=0}^{\infty}b_{3,2}(n)q^n=\frac{\partial}{\partial z}\bigg\vert_{z=1}\mathcal{B}^{1}_{3,2}(z;q)-\frac{\partial}{\partial z}\bigg\vert_{z=1}\mathcal{B}^{2}_{3,2}(z;q).
\end{align}
From the definition of $u(r,n)$, we have
\begin{align*}
\mathcal{B}^{1}_{3,2}(z;q)=\left(1+q+\frac{zq^2}{1-q}\right)\prod_{\substack{n\geq2\\ 3\nmid n}}\left(1+zq^n+\frac{z^2q^{2n}}{1-q^n}\right).  
\end{align*}
Now, $v(r,n)$ counts the number of 3-regular partitions $\lambda$ of $n$ with $r$ number of parts $\lambda_i$ such that $\lambda_i-\lambda_{i+1}=1$. 
If $\lambda$ is a 3-regular partition, then the parts $\lambda_i$ of $\lambda$ for which $\lambda_i-\lambda_{i+1}=1$ are precisely those which are congruent to 2 modulo 3 and part $\lambda_i-1$, which is congruent to 1 modulo 3, is also a part of $\lambda$. Therefore,
\begin{align*}
\mathcal{B}^{2}_{3,2}(z;q)=\prod_{n\geq0}\left(1+\frac{q^{3n+1}}{1-q^{3n+1}}+\frac{q^{3n+2}}{1-q^{3n+2}}+\frac{zq^{3n+1}\cdot q^{3n+2}}{(1-q^{3n+1})(1-q^{3n+2})}\right).
\end{align*} 
By logarithmic differentiation, we obtain
\begin{align}
\frac{\partial}{\partial z}\bigg\vert_{z=1}\mathcal{B}^{1}_{3,2}(z;q)&=\prod_{\substack{n\geq1\\ 3\nmid n}}\frac{1}{1-q^n}\left(q^2+\sum_{\substack{n\geq2\\ 3\nmid n}}\left(q^n+q^{2n}\right) \right)\nonumber\\
&=\frac{(q^3;q^3)_{\infty}}{(q;q)_{\infty}}\left(\frac{q^2}{1-q}+\frac{q^2}{1-q^2}-\frac{q^3}{1-q^3}-\frac{q^6}{1-q^6}\right) \label{6.1.5}
\end{align}
and
\begin{align}
\frac{\partial}{\partial z}\bigg\vert_{z=1}\mathcal{B}^{2}_{3,2}(z;q)&=\prod_{n\geq0}\frac{1}{(1-q^{3n+1})(1-q^{3n+2})}\left(\sum_{n\geq0}q^{3n+1}\cdot q^{3n+2} \right)\nonumber\\
&=\frac{(q^3;q^3)_{\infty}}{(q;q)_{\infty}}\left(\frac{q^3}{1-q^6}\right).\label{6.1.6}
\end{align}
Finally, from \eqref{6.1.4}, \eqref{6.1.5}, and \eqref{6.1.6}, we obtain that
\begin{align}\label{eqn-new-02}
\sum_{n=0}^{\infty}b_{3,2}(n)q^n&=\frac{(q^3;q^3)_{\infty}}{(q;q)_{\infty}}\left(\frac{q^2}{1-q}+\frac{q^2}{1-q^2}-\frac{q^3}{1-q^3}-\frac{q^6}{1-q^6}-\frac{q^3}{1-q^6}\right).
\end{align}
We readily obtain \eqref{6.0.4} from \eqref{eqn-new-02}.
\par Next, we prove \eqref{6.0.5}, the generating function for $b_{4,2}(n)$, on the similar steps of the above proof of \eqref{6.0.4}. In this case
\begin{align}\label{6.1.7}
b_{4,2}(n)=\sum_{m=0}^{\infty}mb_{4,2}(m,n)=\sum_{r=0}^{\infty}r\bar{u}(r,n)-\sum_{r=0}^{\infty}r\bar{v}(r,n),
\end{align}
where
$\bar{u}(r,n)$ is the number of 4-regular partitions $\lambda$ of $n$ such that
\begin{align*}
\delta_{m_{\lambda}(1)\geq2}+\sum_{s\geq2}\left( \delta_{m_{\lambda}(s)\geq1}+\delta_{m_{\lambda}(s)\geq2}\right)=r
\end{align*}
and $\bar{v}(r,n)$ to be the number of 4-regular partitions $\lambda$ of $n$ such that $\ell_1(\lambda)=r$.
If we define $\mathcal{B}^{1}_{4,2}(z;q):=\displaystyle\sum_{n,r\geq0}\bar{u}(r,n)z^rq^n$ and $\mathcal{B}^{2}_{4,2}(z;q):=\displaystyle\sum_{n,r\geq0}\bar{v}(r,n)z^rq^n$, then using \eqref{6.1.7}, we arrive at
\begin{align}\label{6.1.8}
\sum_{n=0}^{\infty}b_{4,2}(n)q^n=\frac{\partial}{\partial z}\bigg\vert_{z=1}\mathcal{B}^{1}_{4,2}(z;q)-\frac{\partial}{\partial z}\bigg\vert_{z=1}\mathcal{B}^{2}_{4,2}(z;q).
\end{align}
From the definition of $\bar{u}(r,n)$ we have
\begin{align*}
\mathcal{B}^{1}_{4,2}(z;q)=\left(1+q+\frac{zq^2}{1-q}\right)\prod_{\substack{n\geq2\\ 4\nmid n}}\left(1+zq^n+\frac{z^2q^{2n}}{1-q^n}\right).  
\end{align*}
Now, $\bar{v}(r,n)$ counts the number of 4-regular partitions $\lambda$ of $n$ with $r$ number of parts $\lambda_i$ such that $\lambda_i-\lambda_{i-1}=1$. If $\lambda$ is a 4-regular partition, 
then a part $\lambda_i$ of $\lambda$ for which $\lambda_i-\lambda_{i-1}=1$ can appear in these cases only:
\begin{enumerate}
\item $\lambda_i\equiv3\pmod4$ and part $\lambda_i-1\equiv2\pmod4$ is also a part of $\lambda$, but $\lambda_i-2$ is not a part of $\lambda$;
\item $\lambda_i\equiv2\pmod4$ and part $\lambda_i-1\equiv1\pmod4$ is also a part of $\lambda$, but $\lambda_i+1$ is not a part of $\lambda$;
\item $\lambda_i\equiv3\pmod4$ and parts $\lambda_i-1\equiv2\pmod4$ and $\lambda_i-2$ are also parts of $\lambda$.
\end{enumerate}
Therefore,
\begin{align*}
\mathcal{B}^{2}_{4,2}(z;q)=\prod_{n\geq0}&\left(1+\sum_{i=1}^{3}\frac{q^{4n+i}}{1-q^{4n+i}}+\frac{zq^{4n+2}}{1-q^{4n+2}}\left( \frac{q^{4n+1}}{1-q^{4n+1}}+\frac{q^{4n+3}}{1-q^{4n+3}}\right)\right. \\
&\left.+\frac{z^2q^{4n+1}\cdot q^{4n+2}\cdot q^{4n+3}}{(1-q^{4n+1})(1-q^{4n+2})(1-q^{4n+2})}\right).
\end{align*}
By logarithmic differentiation, we obtain
\begin{align}
\frac{\partial}{\partial z}\bigg\vert_{z=1}\mathcal{B}^{1}_{4,2}(z;q)=\prod_{\substack{n\geq1\\ 4\nmid n}}\frac{1}{1-q^n}\left(q^2+\sum_{\substack{n\geq2\\ 4\nmid n}}\left(q^n+q^{2n}\right) \right)\label{6.1.9}
\end{align}
and
\begin{align}
\frac{\partial}{\partial z}\bigg\vert_{z=1}\mathcal{B}^{2}_{4,2}(z;q)=&\prod_{\substack{n\geq1\\ 4\nmid n}}\frac{1}{1-q^n}\left( \prod_{n\geq0}(1-q^{4n+1}q^{4n+3})\right)\times\nonumber\\ &\left(\sum_{n\geq0}\frac{q^{4n+2}(q^{4n+1}+q^{4n+3})}{1-q^{4n+1}q^{4n+3}} \right).\label{6.1.10}
\end{align}
Finally, \eqref{6.1.8}, \eqref{6.1.9}, and \eqref{6.1.10} yield \eqref{6.0.5}.
\end{proof}
\begin{remark}
The method used in Theorem \ref{Thm6.2} to obtain the generating functions for $b_{3,2}(n)$ and $b_{4,2}(n)$ can be extended to derive the generating function for $b_{t,2}(n)$, for further values of $t$. But as $t$ increases, the generating functions become more complex.
\end{remark}
\section{Hook length biases in 2-regular partitions}\label{Section6.3}
In this section, we give proofs of Theorems \ref{Thm6.5} and \ref{Thm6.6}. First, we recall another form of representation of a partition given by $$\lambda=(\lambda^{m_1}_1,\lambda^{m_2}_2,\ldots,\lambda^{m_r}_r),$$ 
where $m_i$ is the multiplicity of the part $\lambda_i$ and $\lambda_1>\lambda_2>\cdots>\lambda_r$.
\begin{proof}[Proof of Theorem \ref{Thm6.5}]
For $n>4$, let $\mathcal{B}_2(n)$ denote the set of all 2-regular or odd partitions of $n$. Also, let $\mathcal{R}_n\subset\mathcal{B}_2(n)$ be the set of partitions having at least one part of multiplicity more than 1; $\mathcal{S}_n\subset\mathcal{B}_2(n)$ be 
the set of distinct partitions with smallest part 1; and $\mathcal{T}_n\subset\mathcal{B}_2(n)$ be the set of distinct partitions with smallest part greater than 1. Then $\mathcal{B}_2(n)=\mathcal{R}_n\cup\mathcal{S}_n\cup\mathcal{T}_n$. Given a partition $\lambda\in \mathcal{B}_2(n)$, let $h_k(\lambda)$ denote the number of hooks of length $k$ in the partition $\lambda$.
\par \underline{Case 1:} Let $\lambda\in\mathcal{R}_n$. If a part $\lambda_i$ of $\lambda$ occurs at least twice then the rows corresponding to $\lambda_i$ in the Young diagram of $\lambda$ will have two hooks of length 2 and one hook of length 1. For every other part of $\lambda$ 
which is greater than one, there is one hook of length 1 and at least one hook of length 2. For part of size 1 (if present as a part in $\lambda$), there is a hook of length 1 and there may or may not be a hook of length 2. Considering all the cases, we conclude that the number of hooks of length 1 in $\lambda$ is not more than the number of hooks of length 2 in $\lambda$. Hence, $h_2(\lambda)\geq h_1(\lambda)$ for all $\lambda\in\mathcal{R}_n$. 
\par \underline{Case 2:} Let $\lambda\in\mathcal{S}_n$. Clearly, the number of hooks of length 1 in $\lambda$ is exactly one more than the number of hooks of length 2 in $\lambda$. That is, $h_1(\lambda)=1+h_2(\lambda)$ for all $\lambda\in\mathcal{S}_n$.
\par \underline{Case 3:} Let $\lambda\in\mathcal{T}_n$. Then, every part of $\lambda$ corresponds to exactly one hook of length 1 and one hook of length 2 in the Young diagram of $\lambda$. Therefore, in this case the number of hooks of length 1 in $\lambda$ is same as the number of hooks of length 2 in $\lambda$. Thus, $h_1(\lambda)=h_2(\lambda)$ for all $\lambda\in\mathcal{T}_n$.
\par In order to establish the required hook lengths bias, we next show that the extra hook of length 1 coming from Case 2 will be adjusted in Case 1. For this, we define a map $\Phi_n:\mathcal{S}_n\rightarrow\mathcal{R}_n$ by
$$\Phi_n\left( (\lambda_1,\lambda_2,\ldots,\lambda_r,1)\right) :=(\lambda^2_2,\lambda_3,\ldots,\lambda_r,1^{\lambda_1-\lambda_2+1})$$
if $r\geq2$ and if $r=1$ (which is possible only when $n$ is even),
$$\Phi_n\left( (\lambda_1=n-1,1)\right):=\left\{
\begin{array}{lll}
((\frac{n-2}{2})^2,1^2) & \text{if}\ n\equiv0\pmod4;\vspace{.12cm}\\
((\frac{n}{2})^2) & \text{if}\ n\equiv2\pmod4.
\end{array}
\right.$$
Clearly, $\Phi_n$ is a one-to-one map.  For any $\lambda\in\Phi_n(\mathcal{S}_n)$, 
the number of hooks of length 1 in $\lambda$ is exactly one less than the number of hooks of length 2 in $\lambda$. Thus, $h_1(\lambda)=h_2(\lambda)-1$ for all $\lambda\in\Phi_n(\mathcal{S}_n)$. 
\par Combining all the cases and the fact that $\Phi_n$ is one-to-one, we have
\begin{align*}
b_{2,2}(n)&=\sum_{\lambda\in \mathcal{B}_2(n)}h_2(\lambda)\\
&=\sum_{\lambda\in \mathcal{R}_n}h_2(\lambda)+\sum_{\lambda\in \mathcal{S}_n}h_2(\lambda)+\sum_{\lambda\in \mathcal{T}_n}h_2(\lambda)\\
&=\sum_{\lambda\in \mathcal{R}_n\setminus \Phi_n(\mathcal{S}_n)}h_2(\lambda)+\sum_{\lambda\in \Phi_n(\mathcal{S}_n)}h_2(\lambda)+\sum_{\lambda\in \mathcal{S}_n}h_2(\lambda)+\sum_{\lambda\in \mathcal{T}_n}h_2(\lambda)\\
&\geq \sum_{\lambda\in \mathcal{R}_n\setminus \Phi_n(\mathcal{S}_n)}h_1(\lambda)+\sum_{\lambda\in \Phi_n(\mathcal{S}_n)}(h_1(\lambda)+1)+\sum_{\lambda\in \mathcal{S}_n}(h_1(\lambda)-1)+\sum_{\lambda\in \mathcal{T}_n}h_1(\lambda)\\
&=\sum_{\lambda\in \mathcal{B}_2(n)}h_1(\lambda)\\
&=b_{2,1}(n).
\end{align*}
This completes the proof of the theorem.
\end{proof}
\begin{proof}[Proof of Theorem \ref{Thm6.6}]
Using well-known Euler's identity \cite[(1.2.5)]{Andrews_1998}, $1/(q;q^2)_{\infty}=(-q;q)_{\infty}$ and straightforward $q$-series manipulations, we rewrite \eqref{6.0.1} as
\begin{align}
\sum_{n=0}^{\infty}b_{2,2}(n)q^n&=(-q;q)_{\infty}\left(q^2+\frac{q^3}{1-q^2}+\frac{q^6}{1-q^4} \right)\nonumber\\
&=(-q;q)_{\infty}\left(\frac{q^3}{1-q^2}+\frac{q^2}{1-q^4} \right)\nonumber\\
&=(-q^3;q)_{\infty}\left(\frac{q^2+2q^3+q^4+q^5+q^6}{1-q^2} \right).\label{6.2.1}
\end{align}
Similarly, we rewrite \eqref{6.0.2} to obtain
\begin{align}\label{6.2.2}
\sum_{n=0}^{\infty}b_{2,3}(n)q^n&=(-q^3;q)_{\infty}\left(\frac{q^3+2q^6+q^7}{1-q^2}\right) +(-q;q)_{\infty}\left(\frac{q^3}{1-q^6}\right).
\end{align}
From \eqref{6.2.1} and \eqref{6.2.2}, we get
\begin{align}\label{6.2.3.}
\sum_{n=0}^{\infty}(b_{2,2}(n)-b_{2,3}(n))q^n&=(-q^3;q)_{\infty}\left(\frac{q^2+q^3+q^4+q^5-q^6-q^7}{1-q^2}\right)\nonumber\\
&-(-q;q)_{\infty}\left(\frac{q^3}{1-q^6}\right)\nonumber\\
&=(-q^3;q)_{\infty}\left(\left(\frac{q^2-q^6}{1-q^2}\right)+\left(\frac{q^3-q^7}{1-q^2}\right)+\left(\frac{q^4+q^5}{1-q^2}\right) \right)\nonumber\\
&-(-q;q)_{\infty}\left(\frac{q^3}{1-q^6}\right)\nonumber\\
&=(-q^3;q)_{\infty}\left(q^2+q^3+q^4+q^5+\frac{q^4+q^5}{1-q^2} \right)\nonumber\\
&-(-q;q)_{\infty}\left(\frac{q^3}{1-q^6}\right)\nonumber\\
&=(-q^3;q)_{\infty}\left(\frac{q^2+q^5-q^6-q^9-q^{10}-q^{11}+q^{12}+q^{13}}{(1-q^2)(1-q^6)}\right)\nonumber\\
&=(-q^3;q)_{\infty}\left(\frac{(1-q^2)(q^2+q^4+q^5+q^7-q^{10}-q^{11})}{(1-q^2)(1-q^6)}\right)\nonumber\\
&=\frac{(-q^4;q)_{\infty}}{1-q^3}\left(q^2+q^4+q^5+q^7-q^{10}-q^{11}\right)\nonumber\\
&=(-q^4;q)_{\infty}\left(\frac{q^2+q^7}{1-q^3}+\left(\frac{q^4-q^{10}}{1-q^3}\right)+\left( \frac{q^5-q^{11}}{1-q^3}\right)  \right)\nonumber\\
&=(-q^4;q)_{\infty}\left(\frac{q^2+q^7}{1-q^3}+q^4+q^5+q^7+q^8\right).
\end{align}
Clearly, all the coefficients of the expression in \eqref{6.2.3.}, when expanded as a $q$-series, are non-negative. This completes the proof of Theorem \ref{Thm6.6}.
\end{proof}
\section{Values of $b_{t,k}(k)$ and $b_{t,k}(k+1)$}
In this section, we prove Theorems \ref{Thm6.4} and \ref{Thm6.4a}. 
\begin{proof}[Proof of Theorem \ref{Thm6.4}]
Let $k=2r$ and $\lambda$ be a 2-regular partition of $k$ with a hook of length $k$. Then there are $k$ boxes in the Young diagram of $\lambda$ and the only box of hook of length $k$ is at the top left corner of the Young diagram. 
The 2-regular partitions of $k$ with hook of length $k$ are precisely: $(2r-1,1),\ (2r-3,1^3),\ldots,\ (3,1^{2r-3}),\ (1^{2r})$, which are $r=k/2$ in number. If $k=2r+1$, then the 2-regular partitions of $k$ with hook of length $k$ are precisely: $(2r+1),\ (2r-1,1^2),\ldots,\ (1^{2r+1})$, 
which are $r+1=(k+1)/2$ in number. This proves \eqref{6.0.41}.
\par Let $k=2r$ and $\lambda$ be a 2-regular partition of $k+1$ with a hook of length $k$. Then, the only possible partitions of $k+1$ with a hook of length $k$ are $(2r+1)$ and $(1^{2r+1})$. If $k=2r+1$, then there is only one possible 2-regular partition of $k+1$ with a hook of length $k$, 
which is $(1^{2r+1})$. This gives \eqref{6.0.42}.
\end{proof}
\begin{proof}[Proof of Theorem \ref{Thm6.4a}]
The Young diagram of a partition $\lambda$ of $k$ can have at most one hook of length $k$. The shape of the Young diagram of $\lambda$ with the hook of length $k$ is that of either an inverted L-shape or a vertical strip or a horizontal strip. Let $k=t\ell+r$, 
where $0\leq r\leq t-1$. If $r=0$ then the only $t$-regular partitions of $k$ with the hook of length $k$ are: $(t\ell-j,1^j)$, where $1\leq j\leq t\ell-1$ and $t\nmid j$, which are $\frac{(t-1)k}{t}$ in number. 
If $0<r\leq t-1$ then the corresponding partitions are: $(t\ell+r-j)$, where $0\leq j\leq t\ell+r-1$ with $t\nmid(r-j)$, which are $\frac{(t-1)k+r}{t}$ in number. This proves \eqref{6.0.41a}.
\par Next, the Young diagram of a partition $\lambda$ of $k+1$ can have at most one hook of length $k$. In this case, the shape of the Young diagram is that of either a vertical strip or a horizontal strip or an 
inverted L-shape with the second part being of size 2. Let $k=t\ell+r$, where $0\leq r\leq t-1$. If $r=0$ then the only $t$-regular partitions of $k+1$ with the hook of length $k$ are: $(t\ell-j,2,1^{j-1})$, 
where $1\leq j\leq t\ell-2$ and $t\nmid j$; $(t\ell+1)$ (horizontal strip); $(1^{t\ell+1})$ (vertical strip), which are $((t-1)\ell-1)+2=\frac{(t-1)k+t}{t}$ in number. If $0<r<t-1$ then the corresponding partitions are: $(t\ell+r-j,2,1^{j-1})$, 
where $1\leq j\leq t\ell+r-1$ with $t\nmid(r-j)$; $(t\ell+r+1)$ (horizontal strip); $(1^{t\ell+r+1})$ (vertical strip), which are $((t-1)\ell+r-2)+2=\frac{(t-1)k+r}{t}$ in number. If $r=t-1$ then the corresponding partitions are: $(t\ell+t-1-j,2,1^{j-1})$, 
where $1\leq j\leq t\ell+r-1$ with $t\nmid(j+1)$; (no horizontal strip in this case); $(1^{t\ell+t})$ (vertical strip), which are $((t-1)\ell+r-2)+2=\frac{(t-1)k+r}{t}$ in number. This proves \eqref{6.0.42a}.
\end{proof}
\section{Concluding Remarks}
Using \texttt{SageMath} and the generating functions of $b_{3,1}(n)$ and $b_{3,2}(n)$ derived in \eqref{6.0.3} and \eqref{6.0.4}, we find that: 
\begin{align*}
\sum_{n=0}^{\infty}&(b_{3,2}(n)-b_{3,1}(n))q^n=-q-2q^3-3q^5-q^6-4q^7-2q^8-6q^9-3q^{10}-9q^{11}\\
&-4q^{12}-12q^{13}-6q^{14}-15q^{15}-8q^{16}-19q^{17}-9q^{18}-22q^{19}-9q^{20}-24q^{21}\\
&-7q^{22}-23q^{23}-17q^{25}+14q^{26}-2q^{27}+40q^{28}+27q^{29}+84q^{30}+77q^{31}\\
&+156q^{32}+159q^{33}+267q^{34}+289q^{35}+435q^{36}+486q^{37}+685q^{38}+778q^{39}\\
&+1049q^{40}+1202q^{41}+1570q^{42}+1809q^{43}+2307q^{44}+2665q^{45}+3335q^{46}\\
&+3859q^{47}+4756q^{48}+5504q^{49}+6701q^{50}+7750q^{51}+9341q^{52}+10791q^{53}\\
&+12895q^{54}+14877q^{55}+17646q^{56}+20326q^{57}+23956q^{58}+27548q^{59}\\
&+32286q^{60}+37059q^{61}+43219q^{62}+49518q^{63}+57494q^{64}+65749q^{65}\\
&+76038q^{66}+86796q^{67}+100016q^{68}+113959q^{69}+130885q^{70}+\cdots.
\end{align*}
In view of above, we conjecture the following bias.
\begin{conj}\label{conj6.5.1}
For all $n\geq28$, $b_{3,2}(n)\geq b_{3,1}(n)$.
\end{conj}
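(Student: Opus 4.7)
The plan is to simplify the difference of generating functions and reduce the conjecture to a cleaner coefficient inequality between two partition-like counts, then attack that inequality by combining asymptotic estimates with finite verification. A routine calculation starting from the formulas for $b_{3,1}(n)$ and $b_{3,2}(n)$ in Theorems \ref{Thm6.1} and \ref{Thm6.2} yields
\begin{align*}
\sum_{n=0}^{\infty}(b_{3,2}(n) - b_{3,1}(n))q^n
&= \frac{(q^3;q^3)_\infty}{(q;q)_\infty}\left(-q + \frac{q^2}{(1+q)(1-q^3)}\right) \\
&= -q\, A(q) + q^2\, \widetilde{C}(q),
\end{align*}
where $A(q) := (q^3;q^3)_\infty/(q;q)_\infty$ is the generating function of $3$-regular partitions and
\[
\widetilde{C}(q) := \frac{A(q)}{(1+q)(1-q^3)} = \frac{1}{(1-q^2)^2(1-q^3)(1-q^4)(1-q^5)(1-q^7)(1-q^8)(1-q^{10})\cdots}.
\]
Writing $a_n := [q^n]A(q)$ and $\widetilde{c}_n := [q^n]\widetilde{C}(q)$ (both non-negative), Conjecture \ref{conj6.5.1} is equivalent to
\[
\widetilde{c}_{n-2} \geq a_{n-1} \qquad \text{for all } n \geq 28.
\]

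The identity $\widetilde{C}(q)(1+q)(1-q^3) = A(q)$ unpacks to the recursion $a_n = \widetilde{c}_n + \widetilde{c}_{n-1} - \widetilde{c}_{n-3} - \widetilde{c}_{n-4}$. Substituting this for $a_{n-1}$ on the right-hand side of the target and rearranging reduces the problem to establishing
\[
\widetilde{c}_k \leq \widetilde{c}_{k-3} + \widetilde{c}_{k-4} \qquad \text{for all } k \geq 27,
\]
or equivalently $f_k \leq \widetilde{c}_{k-4}$ for $k \geq 27$, where $f_k := \widetilde{c}_k - \widetilde{c}_{k-3} = [q^k]\widetilde{C}(q)(1-q^3)$ counts $\widetilde{C}$-partitions of $k$ having no part of size $3$.

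To establish this I would combine an asymptotic argument with finite verification. The crucial observation is that $\widetilde{C}(q)/A(q) = 1/[(1+q)(1-q^3)]$ has a simple pole at $q=1$ of leading behaviour $1/[6(1-q)]$, whereas $\widetilde{C}(q)(1-q^3)/A(q) = 1/(1+q)$ is regular at $q=1$. Applying the Hardy--Ramanujan / Meinardus circle method to these eta-quotient generating functions yields $\widetilde{c}_n \sim C_1 \sqrt{n}\, a_n$ and $f_n \sim a_n/2$ for an explicit constant $C_1 > 0$, so $\widetilde{c}_{k-4}/f_k$ grows like $\sqrt{k}$ and the inequality $f_k \leq \widetilde{c}_{k-4}$ holds for all $k$ beyond some explicit threshold $N_0$. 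The remaining range $27 \leq k < N_0$ would be verified by directly computing $\widetilde{c}_n$ in \texttt{SageMath} via the recursion $\widetilde{c}_n = d_n - \widetilde{c}_{n-1}$, with $d_n := [q^n](q^6;q^3)_\infty/(q;q)_\infty$ counting partitions of $n$ avoiding parts $6, 9, 12, \ldots$.

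The main obstacle is producing a workable $N_0$: since $C_1\sqrt{k}$ only slowly overtakes $1/2$ (numerically $f_k/\widetilde{c}_{k-4}$ is already above $0.9$ in the observed range near $k=28$), a black-box application of Meinardus likely yields an $N_0$ too large for practical verification, and one would need a saddle-point analysis tailored to $\widetilde{C}(q)$ to tighten it. A more elementary alternative would be to construct an explicit injection from the $\widetilde{C}$-partitions of $k$ with no $3$ into the $\widetilde{C}$-partitions of $k-4$, e.g.\ by removing a part of size $4$ when one is present, else replacing a part of size $7$ by a part of size $3$, else pairing off two copies of $2$; however, the partitions avoiding $\{3,4,7\}$ with at most one copy of $2$ form a boundary class requiring separate handling, which makes verifying global injectivity delicate near $k = 27$.
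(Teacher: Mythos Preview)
The paper does \emph{not} prove Conjecture~\ref{conj6.5.1}: it is stated in the concluding remarks purely on the basis of a \texttt{SageMath} computation of the first seventy-odd coefficients of $\sum (b_{3,2}(n)-b_{3,1}(n))q^n$, and is left as an open problem. So there is no proof in the paper against which to compare your attempt.

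As for the proposal itself: your generating-function manipulations are correct. The simplification
\[
\sum_{n\ge 0}\bigl(b_{3,2}(n)-b_{3,1}(n)\bigr)q^n
= \frac{(q^3;q^3)_\infty}{(q;q)_\infty}\left(-q+\frac{q^2}{(1+q)(1-q^3)}\right)
\]
checks out, as does the further reduction to the coefficient inequality $\widetilde c_k\le \widetilde c_{k-3}+\widetilde c_{k-4}$ for $k\ge 27$, and the heuristic asymptotics $\widetilde c_n\asymp \sqrt{n}\,a_n$ versus $f_n\sim a_n/2$ are on the right track. This is a genuinely useful reformulation that the paper does not contain.

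However, what you have written is not a proof but a programme, and you say so yourself. Both proposed routes stop at the hard step: for the analytic route you have not produced (or even estimated) an effective crossover $N_0$, and you correctly flag that a black-box Meinardus bound would likely be far too large to close by computation; for the combinatorial route you identify but do not resolve the boundary class obstructing injectivity. Until one of those gaps is closed, the conjecture remains open, just as in the paper.
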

This study gives rise to many interesting questions. We see in Theorem \ref{Thm6.04} that the number of hooks of length $k$ is at least the number of hooks of length $k+1$ in all partitions of $n$, for all $n$ except $n=k+1$. 
But in Theorem \ref{Thm6.5}, we find that this bias reverses for $k=1$ in the case of 2-regular partitions. 
In the direction of Conjecture \ref{conj6.5.1} and following Theorem \ref{Thm6.5}, it would be interesting to study the relationship between $b_{t,1}(n)$ and $b_{t,2}(n)$, for $t\geq3$. Also, we see in Theorem \ref{Thm6.6} 
that the bias for 2-regular partitions remains the same as in the ordinary partitions for $k=2$. Apart from Conjecture \ref{conj6.1}, it would also be interesting to study the relationship between $b_{t,k}(n)$ and $b_{t,k+1}(n)$, for $t\geq3$ and $k\geq2$. 
\par It is evident from \eqref{6.1_Han} that $p_{(k)}(n)\equiv0\pmod{k}$, for every $k$. It would also be natural to examine the existence of such congruences for $b_{t,k}(n)$. In light of \eqref{6.3c}, it would be interesting to look for the corresponding expression for $b_{t,k}(n)$. 
\section{Acknowledgement}
The authors thank the referee for many valuable comments. The second author gratefully acknowledge the Department of Science and Technology, government of India, for the Core Research Grant (CRG/2021/00314) of SERB.


\begin{thebibliography}{999}
\bibitem{Ono_Singh}
T. Amdeberhan, G. E. Andrews, K. Ono, and A. Singh, {\it Hook lengths in self-conjugate partitions}, preprint. arXiv:2312.02933	

\bibitem{Andrews_1998}
G. E. Andrews, {\it The Theory of Partitions}, Cambridge Mathematical Library, Cambridge University Press, Cambridge, 1998. Reprint of the 1976 original.	
	
\bibitem{Ballantine_2023}
C. Ballantine, H. E. Burson, W. Craig, A. Folsom, and B. Wen, {\it Hook length biases and general linear partition inequalities}, Res. Math. Sci. 10, 41 (2023).

\bibitem{Craig}
W. Craig, M. L. Dawsey, and G.-N. Han, {\it Inequalities and asymptotics for hook numbers in restricted partitions}, preprint. arXiv:2311.15013

\bibitem{Garvan_1990}
F. Garvan, D. Kim, and D. Stanton, {\it Cranks and t-cores}, Invent. Math. 101 (1990), 1--18.

\bibitem{Han_2010}
G.-N. Han, {\it The Nekrasov-Okounkov hook length formula: refinement, elementary proof, extension and applications}, Ann. Inst. Fourier 60.1 (2010), 1--29. 

\bibitem{Han_2016}
G-N. Han and H. Xiong, {\it New hook-content formulas for strict partitions}, In: 28th International Conference on Formal Power Series and Algebraic Combinatorics (FPSAC 2016), Discrete Math. Theor. Comput. Sci. Proc. (2016), 635--645.

\bibitem{Han_2017}
G-N. Han and H. Xiong, {\it New hook-content formulas for strict partitions}, J. Algebraic Combin. 45(4) (2017), 1001--1019. 

\bibitem{James}
G. James and A. Kerber, {\it The representation theory of the symmetric group}, Ency. of Math. and its Appl., vol. 16, Addison-Wesley Publishing Co., Reading, Mass., 1981.

\bibitem{Littlewood}
D. E. Littlewood, {\it Modular representations of symmetric groups}, Proc. Roy. Soc. London, Ser. A 209 (1951), 333--353.

\bibitem{Nekrasov}
N. A. Nekrasov and A. Okounkov, {\it Seiberg-Witten theory and random partitions, in The unity of mathematics}, Prog. Math., Birkh{\"a}user Boston (2006), vol. 244, 525--596.

\bibitem{Petreolle}
M. P{\'e}tr{\'e}olle, {\it Quelques d{\'e}veloppements combinatoires autour des groupes de Coxeter et des partitions d'entiers}, Theses, Universit{\'e} Claude Bernard - Lyon I, November 2015.

\bibitem{Stanley}
R. P. Stanley, {\it Enumerative combinatorics}. Vol. 2, volume 62 of Cambridge Studies in Advanced Mathematics, Cambridge University Press, Cambridge, (1999).
\end{thebibliography}
\end{document}